\newcommand{\LL}{\mathrm{L}}
\newcommand{\OD}{\mathrm{OD}}
\newcommand{\HOD}{\mathrm{HOD}}
\newcommand{\V}{V}
\newcommand{\PP}{\mathbb{P}}
\newcommand{\QQ}{\mathbb{Q}}
\newcommand{\RR}{\mathbb{R}}
\newcommand{\ZF}{\mathsf{ZF}}
\newcommand{\ZFC}{\mathsf{ZFC}}
\newcommand{\DC}{\mathsf{DC}}
\newcommand{\AD}{\mathsf{AD}}
\newcommand{\Add}{\mathrm{Add}}
\newcommand{\rng}{\mathrm{rng}}
\newcommand{\id}{\mathrm{id}}
\newcommand{\Ord}{\mathrm{Ord}}
\theoremstyle{definition}
\newtheorem{defn}{Definition}[section]
\newtheorem{thm}[defn]{Theorem}
\newtheorem{lem}[defn]{Lemma}
\newtheorem{Q}[defn]{Question}
\newtheorem{claim}{Claim}
\newtheorem{subclaim}{Subclaim}
\newtheorem{case}{Case}
\begin{document}

\keywords{Axiom of Determinacy, forcing, Descriptive Set Theory}
\subjclass{03E60,03E40,03E15}

\title[Preservation of AD via forcings]{Preservation of AD via forcings}

\author[D.\ Ikegami]{Daisuke Ikegami}
\address[D.\ Ikegami]{College of Engineering, Shibaura Institute of Technology, 307 Fukasaku, Minuma-Ward, Saitama City, 337-8570, Saitama JAPAN}

\email[D.\ Ikegami]{\href{mailto:ikegami@shibaura-it.ac.jp}{ikegami@shibaura-it.ac.jp}}

\author[N.\ Trang]{Nam Trang}
\address[N.\ Trang]{Department of Mathematics, University of North Texas, 1155 Union Circle 311430, Denton, TX 76203-5017, USA}

\email[N.\ Trang]{\href{mailto:nam.trang@unt.edu}{nam.trang@unt.edu}}

\thanks{The authors would like to thank W. Hugh Woodin for generously sharing his results and insight on the topic of this paper. They are also grateful to William Chan and Steve Jackson for their work in~\cite{MR4242147} that inspired the work in Section~\ref{sec:the-reals} of this paper. The first author would like to thank the Japan Society for the Promotion of Science (JSPS) for its generous support through the grant with JSPS KAKENHI Grant Number 19K03604. He is also grateful to the Sumitomo Foundation for its generous support through Grant for Basic Science Research. The second author is grateful to the National Science Foundation (NSF) for its generous support through CAREER Grant DMS-1945592.}

\begin{abstract}
We show that assuming $\ZF+\AD^+ + \lq\lq V = \LL \bigl(\wp (\mathbb{R})\bigr)"$, any poset which increases $\Theta$ does not preserve the truth of $\AD$. We also show that in $\ZF + \AD$, any non-trivial poset on $\RR$ does not preserve the truth of $\AD$. This answers the question of Chan and Jackson~\cite[Question~5.7]{MR4242147}. Furthermore, we show that under the assumptions $\ZF + \AD^+ + \lq\lq V = \LL \bigl( \wp (\RR) \bigr)$'' $+ \lq\lq \Theta \text{ is regular}$'', there is a poset on $\Theta$ which adds a new subset of $\Theta$ while preserving the truth of $\AD$. This answers the question of Cunningham~\cite[Section~5]{Cunningham}.
\end{abstract}

\maketitle

\section{Introduction}
In this paper, we discuss the relationship between forcing and the Axiom of Determinacy ($\AD$), especially on the question what kind of forcings preserve the truth of $\AD$. 

Forcing was introduced by Cohen to prove the independence of the Continuum Hypothesis from Zermelo-Fraenkel set theory with the Axiom of Choice ($\ZFC$). Using forcing, he also proved that the Axiom of Choice is independent of Zermelo-Fraenkel set theory ($\ZF$). Since then, forcing has been a basic tool of constructing models of set theory, and it has been used to obtain various results on independence or unprovability of some mathematical statements from set theory as well as to analyze various kinds of models of set theory. 

The Axiom of Determinacy ($\AD$) was introduced by Mycielski and Steinhaus to consider a situation where every set of reals has good properties that simple sets (such as Borel sets and analytic sets) enjoy, and examples of those good properties are Lebesgue measurability and the Baire property. While $\AD$ contradicts the Axiom of Choice in $\ZF$, it has many beautiful consequences on sets of reals. Furthermore, it has been shown that there are deep connections between models of $\ZF + \AD$ (or models of $\ZF + \AD^+$) and models of $\ZFC$ with Woodin cardinals, and $\AD$ has been playing an important role not only in descriptive set theory to analyze the properties of sets of reals, but also in the theory of large cardinals and inner model theory. 

Let us mention how we got interested in the relationship between forcing and $\AD$, especially on the question what kind of forcings preserve the truth of $\AD$. 
By the result of Kunen, there is no non-trivial elementary embedding $j\colon V \to V$ such that $(V, \in , j)$ is a model of $\ZFC$.\footnote{Here $V$ is the class of all sets, $j$ in the structure $(V, \in , j)$ is considered as the interpretation of a binary predicate on the universe, and the structure $(V, \in , j)$ satisfies Comprehension and Replacement for first-order formulas with the binary predicates for $\in$ and $j$.} Furthermore, Hamkins, Kirmayer, and Perlmutter proved that for any set generic $G$ over $V$, there is no non-trivial elementary embedding $j \colon V \to V[G]$ such that $(V [G], \in , j)$ is a model of $\ZFC$. 

One can then ask questions such as what if the structure $(V ,\in , j)$ or $(V[G] , \in , j)$ is a model of $\ZF$ or $\ZF + \AD$ instead of $\ZFC$. 
Using the method of symmetric models, Woodin proved that there are a set generic $G$ over $V$ and a non-trivial elementary embedding $j \colon V \to V[G]$ such that $(V[G], \in , j)$ is a model of $\ZF + \AD$. However, in his example, $j \upharpoonright \Ord$ is the identity map, so there is no critical point of $j$.
 
As far as we know, it is still open whether there are a set generic $G$ over $V$ and an elementary embedding $j \colon V \to V[G]$ such that $(V[G], \in , j)$ is a model of $\ZF + \AD$ and $j \upharpoonright \Ord$ is not the identity map. We are especially interested in the case when the critical point of $j$ is $\omega_1$ in $V$ because if the critical point of $j$ is $\omega_1$ in $V$, then the forcing to obtain $V[G]$ must add new reals to $V$ and $\AD$ has influence on reals and sets of reals. 
To obtain such a $j$, one needs to have a poset $\PP$ to produce such a model $V[G]$, and the poset $\PP$ must add new reals while preserve the truth of $\AD$ from $V$ to $V[G]$. Hence we have a test question: Is there any poset which adds a new real while preserving the truth of $\AD$?
This is how we got interested in the relationship between forcing and $\AD$.

We still do not know if there is any poset which adds a new real while preserving the truth of $\AD$. Considering this question, we have observed that many forcings adding a new real do not preserve the truth of $\AD$. A typical example is Cohen forcing. It is well-known that if $G$ is $V$-generic for Cohen forcing, then in $V[G]$, the set of reals in $V$ does not have the Baire property. In particular, $\AD$ must fail in $V[G]$. 
On the other hand, there are posets which add a new set while preserving the truth of $\AD$. By the result of Woodin~\cite[Section~3]{MR736611}, if we assume $\ZF + \AD + \lq\lq V = \LL (\RR)$'' and let $\kappa$ be a sufficiently big cardinal and $\PP$ be the poset for adding a Cohen subset of $\kappa$ in $\HOD$, the class of hereditarily ordinal definable sets, then the poset $\PP$ adds a new set while preserving the truth of $\AD$. Actually, the poset $\PP$ does not add any set of reals to $\V$ in this case.

We have been wondering what kind of forcings preserve the truth of $\AD$. Our intuition was such a poset $\PP$ would not be able to change the structure of sets of reals drastically. The intuition was partially justified using the ordinal $\Theta$, the supremum of ordinals which are surjective images of $\RR$, by the following theorem:
\newtheorem*{sec3thm}{Theorem~\ref{thm:increasing-Theta-destruction}}
\begin{sec3thm}
Assume $\ZF+\AD^+ + \lq\lq V = \LL \bigl(\wp (\mathbb{R})\bigr)"$. Suppose that a poset $\mathbb{P}$ increases $\Theta$, i.e., $\Theta^{V} < \Theta^{V[G]}$ for any $\mathbb{P}$-generic filter $G$ over $V$. Then $\AD$ fails in $V [G]$ for any $\mathbb{P}$-generic filter $G$ over $V$.
\end{sec3thm}

However, the assumption $\lq\lq V = \LL \bigl(\wp (\mathbb{R})\bigr)"$ is essential in Theorem~\ref{thm:increasing-Theta-destruction}:
\newtheorem*{sec3ex}{Theorem~\ref{thm:increasing-Theta-example}}
\begin{sec3ex}
It is consistent relative to $\mathsf{ZF}+\mathsf{AD}_{\mathbb{R}}$ that $\mathsf{ZF}+\mathsf{AD}$ holds and there is a poset $\mathbb{P}$ increasing $\Theta$ while preserving $\mathsf{AD}$, i.e., for any $\mathbb{P}$-generic filter $G$ over $V$, we have $\Theta^{V} < \Theta^{V[G]}$ and that $\AD$ holds in $V[G]$.\footnote{For an expert on determinacy, the assumption $\ZF + \AD_{\RR}$ is an overkill. The proof of Theorem~\ref{thm:increasing-Theta-example} shows that the assumption $\ZF + \AD^+ + \lq\lq \Theta > \Theta_0$'' is enough.}
\end{sec3ex}
In particular, there is a poset which adds a new set of reals (but does not add a new real) while preserving the truth of $\AD$. 

After we announced Theorem~\ref{thm:increasing-Theta-destruction} and Theorem~\ref{thm:increasing-Theta-example}, Chan and Jackson~\cite{MR4242147} worked on the question what kind of forcings do not preserve the truth of AD. They proved that assuming $\ZF + \AD$, if a non-trivial poset $\PP$ is a wellorderable forcing of cardinality less than $\Theta$, then $\PP$ does not preserve the truth of $\AD$ (\cite[Theorem~3.2]{MR4242147}). They also proved that assuming $\ZF + \AD + \lq\lq \Theta \text{ is regular}$'', if a non-trivial poset $\PP$ is a surjective image of $\RR$, then $\PP$ does not preserve the truth of $\AD$ (\cite[Theorem~5.5]{MR4242147}). 
Then they asked whether $\ZF + \AD$ only (i.e., without assuming the regularity of $\Theta$) implies that if a non-trivial poset $\PP$ is a surjective image of $\RR$, then $\PP$ does not preserve the truth of $\AD$ (\cite[Question~5.7]{MR4242147}).
We give a positive answer to their question:
\newtheorem*{sec4thm}{Theorem~\ref{thm:the-reals}}
\begin{sec4thm}
Assume $\mathsf{ZF}+\mathsf{AD}$. Let $\PP$ be any non-trivial poset which is a surjective image of $\mathbb{R}$ and $G$ be any $\mathbb{P}$-generic filter over $V$. Then $\AD$ fails in $V [G]$.
\end{sec4thm}

We now turn to positive results on the question what kind of forcings preserve the truth of $\AD$. 
As was mentioned in a previous paragraph, By the result of Woodin~\cite[Section~3]{MR736611}, if we assume $\ZF + \AD + \lq\lq V = \LL (\RR)$'' and let $\kappa$ be a sufficiently big cardinal and $\PP$ be the poset for adding a Cohen subset of $\kappa$ in $\HOD$, the class of hereditarily ordinal definable sets, then the poset $\PP$ adds a new set while preserving the truth of $\AD$. A natural question would be how small one can take $\kappa$ for this result. 
Cunningham~\cite{Cunningham} worked on this question. He proved that $\kappa$ can be taken as any regular cardinal larger than $\Theta^{+}$ (\cite[Subsection~4.1]{Cunningham}). Then he asked whether $\kappa$ can be taken as $\Theta$ (\cite[Section~5]{Cunningham}). 
We answer his question positively. In fact, we prove a more general theorem as follows: 
\newtheorem*{sec5thm}{Theorem~\ref{thm:subset-of-Theta-positive}}
\begin{sec5thm}
Assume $\mathsf{ZF}+\mathsf{AD}^+ + \lq\lq V = \mathrm{L} \bigl(\wp (\mathbb{R})\bigr)"$. Suppose that $\Theta$ is regular. Then there is a poset $\mathbb{P}$ on $\Theta$ which adds a subset of $\Theta$ while preserving $\mathsf{AD}$, i.e., for any $\mathbb{P}$-generic filter $G$ over $V$, there is a subset of $\Theta^{V}$ which belongs to $V[G] \setminus V$ and $\AD$ holds in $V[G]$.\footnote{The proof of Theorem~\ref{thm:subset-of-Theta-positive} shows that in both Case~\ref{case:ADRfail} and Case~\ref{case:ADR}, the poset $\PP$ does not add any new set of reals to $V$. In particular, the poset $\PP$ preserves the truth of $\AD^+$ as well.}
\end{sec5thm}

Notice that $\ZF + \AD + \lq\lq V = \LL (\RR)$'' implies the assumptions of Theorem~\ref{thm:subset-of-Theta-positive} including the regularity of $\Theta$. 
Also, in case of $\lq\lq V = \LL (\RR)$'', the poset $\PP$ is the one for adding a Cohen subset of $\Theta$ in $\HOD$ as in Case~\ref{case:ADRfail} in the proof of Theorem~\ref{thm:subset-of-Theta-positive}. 
Therefore, the arguments for Theorem~\ref{thm:subset-of-Theta-positive} answer the question of Cunningham~\cite[Section~5]{Cunningham}.

We also note that Theorem~\ref{thm:subset-of-Theta-positive} is optimal in the following two senses: In one sense, the size of the poset $\PP$ cannot be smaller than $\Theta$. As was mentioned in a previous paragraph, by the result of Chan and Jackson~\cite[Theorem~3.2]{MR4242147}, any wellorderable forcing of cardinality less than $\Theta$ cannot preserve the truth of $\AD$. Also, by Theorem~\ref{thm:the-reals} (or by the result of Chan and Jackson~\cite[Theorem~5.5]{MR4242147} in case $\Theta$ is regular), any poset on $\RR$ (or a surjective image of $\RR$) cannot preserve the truth of $\AD$. 
In the other sense, unless the poset $\PP$ adds a new real, the poset $\PP$ cannot add a new bounded subset of $\Theta$ while preserving the truth of $\AD$. This is because 
if $\PP$ does not add any real and both $V$ and $V[G]$ are models of $\AD$, then 
by the Moschovakis Coding Lemma, $V$ and $V[G]$ have the same bounded subsets of $\Theta$, leading to the situation that the poset $\PP$ cannot add a bounded subset of $\Theta$.

After looking at Theorem~\ref{thm:subset-of-Theta-positive}, it is natural to ask whether the assumption $\lq\lq \Theta \text{ is regular}$'' is essential there. We do not know the answer to this question. However, in case $\Theta$ is singular, we have $\AD_{\RR}$ under the assumptions of Theorem~\ref{thm:subset-of-Theta-positive}. In case $\AD_{\RR}$ holds, which is Case~\ref{case:ADR} in the proof of Theorem~\ref{thm:subset-of-Theta-positive}, the poset $\PP$ in Theorem~\ref{thm:subset-of-Theta-positive} is for adding a Cohen subset of $\Theta$ in $\HOD$. 
We show that this particular poset cannot preserve the truth of $\AD$ if $\Theta$ is singular:
\newtheorem*{sec5ex}{Theorem~\ref{thm:subset-of-Theta-negative}}
\begin{sec5ex}
Assume $\mathsf{ZF}+\mathsf{AD}^+ + \lq\lq V = \mathrm{L} \bigl(\wp (\mathbb{R})\bigr)"$. Suppose that $\Theta$ is singular and let $\mathbb{P}$ be $\Add (\Theta, 1)$ in HOD, where $\Add (\Theta , 1) = \{ p \mid p \colon \gamma \to 2 \text{ for some $\gamma < \Theta$} \}$. Then $\AD$ fails in $V [G]$ for any $\mathbb{P}$-generic filter $G$ over $V$.
\end{sec5ex}


\section{Basic definitions, theorems, and lemmas}\label{sec:basicnotions}

In this section, we introduce basic definitions, theorems, and lemmas we will use in later sections of the paper.
We assume that readers are familiar with the basics of forcing and descriptive set theory. For basic definitions not given in this paper, see Jech~\cite{Jech} and Moschovakis~\cite{new_Moschovakis}.
When we say \lq \lq reals", we mean elements of the Baire space $\omega^{\omega}$ or of the Cantor space $2^{\omega}$.

We start with some basic definitions which will be used throughout the paper:
\begin{defn}

${}$

\begin{enumerate}
\item The ordinal {\it $\Theta$} is the supremum of ordinals which are surjective images of $\RR$. 

\item A set $x$ is {\it OD from sets $y_1 , \ldots , y_n$} if $x$ is definable by a first-order formula with an ordinal and $y_1 , \ldots , y_n$ as parameters.

\item Let $Y$ be a set. We say a set $x$ is {\it hereditarily $\OD_Y$} if any element of the transitive closure of $\{ x \}$ is OD from some elements of $Y$.

\item For a set $Y$, we write {$\HOD_Y$} for the collection of sets which are hereditarily $\OD_Y$. When $Y$ is the empty set, we simply write $\HOD$ for $\HOD_Y$.
\end{enumerate}
\end{defn}

\begin{defn}
Let $A$ and $B$ be sets of reals (or subsets of the Baire space $\omega^{\omega}$). We say $A$ is {\it Wadge reducible to $B$} if there is a continuous function $f \colon \omega^{\omega} \to \omega^{\omega}$ such that $A = f^{-1} (B)$. When $A$ is Wadge reducible to $B$, we write $A \le_{\text{W}} B$. The order $\le_{\text{W}}$ is called the {\it Wadge order on sets of reals}.
\end{defn}

\begin{lem}[Wadge's Lemma]
Assume $\ZF + \AD$. Then for any sets $A , B$ of reals, we have either $A \le_{\text{W}} B$ or $B \le_{\text{W}} \omega^{\omega} \setminus A$.
\end{lem}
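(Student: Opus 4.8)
The plan is to play the \emph{Wadge game} $G_{\text{W}}(A,B)$ and apply $\AD$. In this game, players I and II alternate; I plays natural numbers $x(0), x(1), \ldots$ building a real $x \in \omega^{\omega}$, while on each of its turns II either plays a natural number or ``passes'', building a sequence of natural numbers, subject to the rule that if II passes from some round on then II loses, and otherwise II builds a real $y \in \omega^{\omega}$. Player II wins the run if $x \in A \leftrightarrow y \in B$. This is routinely coded as a game on $\omega$, so $\AD$ gives that $G_{\text{W}}(A,B)$ is determined, and I would split into the two cases according to who has a winning strategy.

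First, suppose II has a winning strategy $\tau$. For each $x \in \omega^{\omega}$, let $f(x) \in \omega^{\omega}$ be the real that II produces when I plays out $x$ and II follows $\tau$; this is a genuine real, since a run in which II passes cofinally is a loss for II and hence $\tau$ never does this. The value $f(x)(m)$ is determined by the finite initial segment of $x$ played up to the round at which II makes its $(m{+}1)$-st non-pass move, so $f$ is continuous. Since $\tau$ is winning, $x \in A \leftrightarrow f(x) \in B$ for every $x$, i.e. $A = f^{-1}(B)$, which is exactly $A \le_{\text{W}} B$.

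Next, suppose I has a winning strategy $\sigma$. For each $y \in \omega^{\omega}$, let $g(y) \in \omega^{\omega}$ be the real I produces when I follows $\sigma$ and II plays out $y$ without ever passing; since I moves at every round, $g(y)$ is total, and $g(y)(n)$ depends only on $y \upharpoonright n$, so $g$ is continuous. Because $\sigma$ is winning for I, the run is a loss for II, i.e. $\neg\bigl(g(y) \in A \leftrightarrow y \in B\bigr)$, equivalently $y \in B \leftrightarrow g(y) \notin A$. Hence $B = g^{-1}(\omega^{\omega} \setminus A)$, which gives $B \le_{\text{W}} \omega^{\omega} \setminus A$. In either case the dichotomy holds.

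The mathematical content is essentially all in setting up the game correctly; the one point that needs care — and where a naive argument can go astray — is ensuring that the strategy of the winning player yields a \emph{total} continuous function on all of $\omega^{\omega}$. For II this is precisely why the ``no cofinal passing'' rule is imposed, so that a winning $\tau$ always outputs a real; for I it is automatic, since I is never permitted to pass. Once this is handled, extracting the continuous reductions from the strategies and reading off $A = f^{-1}(B)$ or $B = g^{-1}(\omega^{\omega}\setminus A)$ is routine.
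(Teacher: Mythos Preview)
Your argument is correct and is precisely the classical Wadge-game proof; the paper itself does not give a proof but simply cites the literature, so there is nothing to compare. One tiny wording quibble: you once write ``passes cofinally'' where you mean ``passes from some round on'' (i.e., plays only finitely many non-pass moves)---passing infinitely often is harmless so long as II also \emph{plays} infinitely often---but you state the rule correctly elsewhere and your analysis of totality and continuity is right.
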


\begin{proof}
See e.g., \cite[Lemma~2.1]{Wadge_van_Wesep}.
\end{proof}



The following theorems will be used in Section~\ref{sec:increasing-Theta}.

\begin{thm}[Woodin]\label{thm:HODLA}
Assume $\ZF+\AD^+ + \lq\lq V = \LL \bigl(\wp (\mathbb{R})\bigr)"$. Then the model $\HOD$ is of the form $\LL [Z]$ for some subset $Z$ of $\Theta$, and there are a poset $\QQ$ in $\HOD$ and a $\QQ$-generic filter $H$ over $\HOD$ such that $\HOD \subseteq V \subseteq \HOD[H]$. 
\end{thm}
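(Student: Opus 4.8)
The plan is to read off both assertions from the fine-structural analysis of $\HOD$ in models of $\AD^+$, which is due to Woodin (for $V = \LL(\RR)$ see \cite[Section~3]{MR736611}; the general case $V = \LL(\wp(\RR))$ is also due to Woodin). I would invoke --- and not reprove --- the following three facts about $\HOD$ under the hypotheses of the theorem: (i) $\HOD$ is a fine-structural model, i.e.\ a (possibly strategy-)premouse, all of whose extenders and strategy predicates are indexed strictly below $\Theta$; (ii) $\HOD \models$ ``$\Theta$ is a Woodin cardinal''; and (iii) $V$ is the derived model of $\HOD$ computed at $\Theta$.

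For the first assertion, $\HOD = \LL[Z]$ with $Z \subseteq \Theta$, I would argue as follows. By (ii) the ordinal $\Theta$ is inaccessible in $\HOD$, so $V_\Theta^{\HOD}$ has size $\Theta$ there and may be coded, together with the restriction to $\Theta$ of the extender/strategy sequence of $\HOD$, by a single subset $Z$ of $\Theta$ lying in $\HOD$. By (i) there are no predicates indexed at or above $\Theta$, so above $\Theta$ the model $\HOD$ is obtained from $Z$ by the plain $\LL$-recursion; hence $\HOD = \LL[Z]$.

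For the second assertion I would take $\QQ = \Col(\omega,{<}\Theta)$ as computed in $\HOD$ --- a set forcing of $\HOD$, since $\Theta$ is an ordinal --- and fix $H$ that is $\QQ$-generic over $\HOD$. Setting $\RR^{\ast} = \bigcup_{\gamma<\Theta}\bigl(\RR \cap \HOD[H\upharpoonright\gamma]\bigr)$ and letting $\mathrm{Hom}^{\ast}$ be the collection of reinterpretations of the ${<}\Theta$-homogeneously Suslin sets of the intermediate models $\HOD[H\upharpoonright\gamma]$ for $\gamma<\Theta$, one checks that $\RR^{\ast}$ and $\mathrm{Hom}^{\ast}$ are definable in $\HOD[H]$ from $H$ and the set $V_{\Theta+\omega}^{\HOD}$, so that the derived model $D(\HOD,H) := \LL(\RR^{\ast},\mathrm{Hom}^{\ast})$ lies in $\HOD[H]$. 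By (iii), $V = D(\HOD,H)$; combined with the trivial inclusion $\HOD \subseteq V$, this yields $\HOD \subseteq V \subseteq \HOD[H]$ with $\QQ \in \HOD$, as required.

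The hard part will be entirely the fine-structural input (i)--(iii): constructing $\HOD$ as a premouse, verifying that $\Theta$ is Woodin in it, and proving the reconstruction $V = D(\HOD,H)$ --- the last being where $\AD^+$ (every set of reals is $\infty$-Borel) and $V = \LL(\wp(\RR))$ genuinely enter, through the fact that the $\infty$-Borel code of each set of reals of $V$ is captured by an $\HOD$-generic real appearing in some $\HOD[H\upharpoonright\gamma]$ with $\gamma<\Theta$. Granting that machinery, everything above is routine coding and book-keeping about which objects live in $\HOD[H]$.
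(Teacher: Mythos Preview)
The paper does not prove this theorem; it refers the reader to \cite[Theorem~3.1.9]{NamThesis} and treats the result as a black box. So in spirit your decision to quote deep facts and not reprove them matches the paper exactly.

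Two remarks on the outline you give. First, routing the equality $\HOD = \LL[Z]$ through the full fine-structural $\HOD$ analysis (your fact~(i), that $\HOD$ is a strategy premouse) is considerably heavier than the argument in the cited reference. Under $V = \LL\bigl(\wp(\RR)\bigr)$ every set is definable from an ordinal and a set of reals; one shows directly, using the Wadge hierarchy and the Coding Lemma, that the $\OD$ sets of reals are enumerated in order-type at most $\Theta$, and that this enumeration (coded as a single $Z \subseteq \Theta$) already suffices to generate all of $\HOD$ over $\LL$. The mouse analysis of $\HOD$ is a far deeper theorem and is not what is invoked here.

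Second, your treatment of the generic $H$ contains a genuine slip. You write ``fix $H$ that is $\QQ$-generic over $\HOD$'' and then deduce $V = D(\HOD,H)$ from (iii). But the derived model depends on $H$: for an arbitrary $\HOD$-generic $H$ there is no reason that $\RR^V \subseteq \HOD[H]$, let alone that the symmetric reals $\RR^{\ast}$ coincide with $\RR^V$. Fact~(iii), stated precisely, is existential: there is a \emph{specific} $H$ --- constructed, say, in a $\Col(\omega,\RR)$-extension of $V$ by absorbing the reals of $V$ one at a time into successive stages of the collapse --- for which $\RR^{\ast} = \RR^V$ and hence $V = D(\HOD,H) \subseteq \HOD[H]$. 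Your closing paragraph shows you are aware the reals of $V$ must appear in the intermediate models $\HOD[H\upharpoonright\gamma]$, but the earlier ``fix $H$'' obscures that this is exactly where the work lies. The paper's own preferred device elsewhere (Lemma~\ref{lem:Qomega}) is to take $\QQ$ to be a direct limit of Vop\v{e}nka algebras rather than the Levy collapse, which makes the construction of the required $H$ more explicit.
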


\begin{proof}
See e.g., \cite[Theorem~3.1.9]{NamThesis}.
\end{proof}

\begin{thm}[Moschovakis]\label{thm:Moschovakis}
Assume $\ZF+\AD$. Then $\Theta$ is a limit of measurable cardinals.
\end{thm}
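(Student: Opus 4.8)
The plan is to prove the equivalent assertion that the measurable cardinals are cofinal below $\Theta$: we fix $\xi < \Theta$ and produce a measurable cardinal $\kappa$ with $\xi < \kappa < \Theta$. The cardinal $\kappa$ will be taken of the form $\delta(\Gamma)$, the supremum of the lengths of all prewellorderings of $\RR$ belonging to $\Delta := \Gamma \cap \check{\Gamma}$, for a suitably chosen pointclass $\Gamma$ that is non-self-dual, closed under $\forall^\RR$, and has the scale property.

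The first step is to see that such a $\Gamma$ can be found with $\xi < \delta(\Gamma) < \Theta$. Under $\ZF + \AD$ every ordinal below $\Theta$ is a surjective image of $\RR$, hence the length of a prewellordering of a set of reals; relativizing the (periodicity-theorem) analysis of the projective-like hierarchy to sets of reals of sufficiently large Wadge rank — for instance by passing to relativizations $\Pi^1_{2n+1}(A)$ of the odd levels of the projective hierarchy, whose associated ordinals are cofinal in $\Theta$ — one obtains a non-self-dual, scaled $\Gamma$ closed under $\forall^\RR$ with $\xi < \delta(\Gamma) < \Theta$.

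The core of the argument is Moschovakis's theorem that such a $\delta(\Gamma)$ carries the \emph{strong partition relation} $\delta(\Gamma) \to (\delta(\Gamma))^{\delta(\Gamma)}_2$. This is where essentially all the work lies: given a partition of the increasing $\delta(\Gamma)$-sequences from $\delta(\Gamma)$ into two pieces, one uses a $\Gamma$-scale to code such sequences (equivalently, the relevant order types below $\delta(\Gamma)$) by reals, transfers the partition to an integer game whose payoff set is definable from the coding, applies $\AD$ to obtain a winning strategy for one of the players, and then extracts a homogeneous set of full order type $\delta(\Gamma)$ by a boundedness argument together with the Moschovakis Coding Lemma. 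Finally, one invokes the purely combinatorial fact that the strong partition relation $\kappa \to (\kappa)^\kappa_2$ makes $\kappa$ measurable: coloring each increasing $f \colon \kappa \to \kappa$ by whether $\sup_{n<\omega} f(n)$ belongs to a given $A \subseteq \kappa$ and taking a homogeneous set yields a $\kappa$-complete — in fact normal — ultrafilter on $\kappa$ (the $\omega$-club filter), so $\delta(\Gamma)$ is measurable. As $\xi < \delta(\Gamma) < \Theta$ and $\xi$ was arbitrary, $\Theta$ is a limit of measurable cardinals.

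The main obstacle is the strong partition theorem for $\delta(\Gamma)$: the reduction and the final combinatorial implication are routine, but that step is the genuinely hard descriptive-set-theoretic input — it is the one place where the Coding Lemma and the scale property are really used — and it is also where one has to be careful that, working only from $\ZF + \AD$ rather than from $\lq\lq V = \LL(\RR)$'' or $\AD^+$, the chosen pointclasses genuinely carry scales.
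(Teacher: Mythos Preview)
Your proposal is correct and takes essentially the same route as the paper: both argue that $\Theta$ is a limit of strong partition cardinals and then invoke the combinatorial fact that every strong partition cardinal is measurable. The paper's proof is a two-line sketch citing references for each step and explicitly notes that the argument goes through in $\ZF + \AD$ without $\DC_{\RR}$, whereas you spell out the scale/Coding Lemma mechanism for producing the strong partition cardinals; your closing caveat about scales under bare $\ZF + \AD$ is exactly the point the paper handles by citing Woodin's $\AD^+$ manuscript for a choice-free treatment.
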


\begin{proof}
For a proof without assuming $\DC_{\RR}$, one could first prove that $\Theta$ is a limit of strong partition cardinals under $\ZF + \AD$ as in \cite{ADplus} and then verify that every strong partition cardinal is measurable under $\ZF$ as in \cite[28.10~Theorem]{MR1994835}.
\end{proof}

\begin{thm}[Solovay]\label{thm:Solovay}
Assume $\ZF + \AD_{\RR}$. Then for any set $A$ of reals, there is a set $B$ of reals which is not $\OD$ from $A$ and any real.
\end{thm}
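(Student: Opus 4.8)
The plan is to argue by contraposition. Assume that every set of reals is $\OD$ from $A$ and a real, let $\mathcal{F}_A$ be the family of all such sets of reals, and aim for a contradiction with $\AD_\RR$. First I would extract ordinal information from the assumption. The family $\mathcal{F}_A$ is closed under complements and under Wadge reducibility --- if $C=f^{-1}(B)$ with $f$ continuous and $B\in\mathcal{F}_A$, then a real coding $f$ together with a real witnessing $B\in\mathcal{F}_A$ witnesses $C\in\mathcal{F}_A$ --- so by Wadge's Lemma $\mathcal{F}_A$ is an initial segment of the Wadge hierarchy. Under our assumption this initial segment is all of $\wp(\RR)$, so, since the Wadge hierarchy has length $\Theta$ under $\AD$, one reads off for each $\gamma<\Theta$ a surjection $\RR\twoheadrightarrow\gamma$ that is $\OD$ from $A$ and a real: from a set $B$ of Wadge rank $\gamma$, say $\OD$ from $A$ and a real $x$, the prewellordering of the codes for continuous functions ordered by the Wadge rank of the corresponding preimage of $B$ is $\OD$ from $A$ and $x$ and, by Wadge's Lemma, has length $\gamma+1$. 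Conversely, if some $\gamma<\Theta$ were not a surjective image of $\RR$ via a map $\OD$ from $A$ and a real, any set of reals of Wadge rank $\ge\gamma$ would already witness the theorem. Thus the theorem reduces to the following statement: under $\AD_\RR$, the supremum $\Theta_0^{(A)}$ of the ordinals that are surjective images of $\RR$ via maps $\OD$ from $A$ and a real is strictly below $\Theta$.

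Second --- and this is the heart of the matter --- I would derive a contradiction from $\Theta_0^{(A)}=\Theta$. The target is a surjection of $\RR$ onto some ordinal $\ge\Theta$, which contradicts the definition of $\Theta$; it should be produced by amalgamating the surjections $\RR\twoheadrightarrow\gamma$ ($\OD$ from $A$ and a real, $\gamma<\Theta$) from the first step. A naive amalgamation fails because the ordinal parameters in these definitions need not be bounded below $\Theta$, and this is precisely where $\AD_\RR$ enters rather than bare $\AD$. I would use: full uniformization and $\DC_\RR$, both consequences of $\AD_\RR$, to trade choices of ordinals below $\Theta$ for choices of reals; the fact (provable under $\AD_\RR$) that $\mathrm{cof}(\Theta)>\omega$; and the fine, countably complete measure $\mu$ on $\wp_{\omega_1}(\RR)$ that $\AD_\RR$ provides. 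The scheme is to reflect the statement ``every $\gamma<\Theta$ is a surjective image of $\RR$ via a map $\OD$ from $A$ and a real'' to a $\mu$-measure-one set of countable $\sigma\subseteq\RR$, bound the ordinals reached ``using only parameters from $\sigma$'' by an ordinal $\delta_\sigma<\Theta$, and then, via fineness and countable completeness of $\mu$ together with $\mathrm{cof}(\Theta)>\omega$, carry $\sigma\mapsto\delta_\sigma$ through the ultrapower by $\mu$ to contradict $\sup_\sigma\delta_\sigma=\Theta$.

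The main obstacle is exactly this last step: controlling how far up the ordinal parameters appearing in ``$\OD$ from $A$ and a real'' are forced to climb. This is in essence Solovay's original argument that $\AD_\RR$ implies $\Theta_0<\Theta$, relativized to the parameter $A$, and it cannot be replaced by an elementary argument, because in $L(\RR)$ --- a model of $\AD$ but not of $\AD_\RR$ --- every set of reals is already $\OD$ from a real, so the conclusion fails there for $A=\emptyset$. Hence any proof must exploit features of $\AD_\RR$ not available under $\AD$ alone. (In the write-up one may instead simply cite Solovay's theorem.)
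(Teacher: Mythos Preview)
The paper does not supply a proof; it simply cites Solovay's original paper. Your sketch, by contrast, attempts the actual argument, so the relevant comparison is between your outline and Solovay's own proof.

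Your first paragraph---reducing the theorem to the inequality $\Theta_0^{(A)} < \Theta$---is correct and standard.

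The second paragraph, however, contains a genuine error: it is \emph{not} provable from $\ZF + \AD_{\RR}$ alone that $\mathrm{cof}(\Theta) > \omega$. There are models of $\ZF + \AD^+ + \lq\lq V = \LL\bigl(\wp(\RR)\bigr)" + \AD_{\RR}$ in which the Solovay sequence has length exactly $\omega$; in such a model $\Theta = \sup_{n<\omega} \Theta_n$ and hence $\mathrm{cof}(\Theta) = \omega$. (What \emph{is} true is that $\AD_{\RR} + \DC$ implies $\Theta$ is regular, but $\AD_{\RR}$ does not imply $\DC$.) Since your concluding step hinges on $\mathrm{cof}(\Theta) > \omega$, the scheme as written does not go through. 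Even granting that hypothesis, it is not clear how countable completeness of $\mu$ alone would bound the function $\sigma \mapsto \delta_\sigma$ below $\Theta$; countable completeness rules out countable partitions into null sets, not unbounded ordinal-valued functions into a cardinal of uncountable cofinality.

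Solovay's actual argument is a short diagonalization using only uniformization (a consequence of $\AD_{\RR}$), with no appeal to $\mathrm{cof}(\Theta)$ or to the supercompactness measure on $\wp_{\omega_1}(\RR)$. Under $\AD$, for each real $x$ the set $\{y \in \RR : y \text{ is } \OD \text{ from } A \text{ and } x\}$ is wellorderable, hence not all of $\RR$; so the relation $\{(x,y) : y \text{ is not } \OD \text{ from } A \text{ and } x\}$ has full domain. Uniformize it to obtain $\tau \colon \RR \to \RR$ with $\tau(x)$ not $\OD$ from $A$ and $x$ for every $x$. If every set of reals were $\OD$ from $A$ and a real, then $\tau$ (as a subset of $\RR \times \RR$) would be $\OD$ from $A$ and some real $c$, whence $\tau(c)$ would be $\OD$ from $A$ and $c$---contradicting the defining property of $\tau$.
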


\begin{proof}
See \cite[Lemma~2.2]{Solovay_AD_R}.
\end{proof}

The following theorem will be useed in Section~\ref{sec:the-reals}:
\begin{thm}[Chan and Jackson]\label{thm:chan-Jackson}
Assume $\mathsf{ZF}+\mathsf{AD}$ and $\Theta$ is regular. Then for any non-trivial poset $\mathbb{P}$ on $\mathbb{R}$ and any $\mathbb{P}$-generic filter $G$ over $V$, the axiom $\AD$ fails in $V [G]$.
\end{thm}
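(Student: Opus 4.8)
The plan is to argue by contradiction: assume $\AD$ holds in $V[G]$. Since $\PP$ is non-trivial, the generic filter $G\subseteq\PP\subseteq\RR$ is a set of reals lying in $V[G]\setminus V$, and the goal is to show that no such extension can model $\AD$. I would first record how severely $\AD$ in $V$ constrains $\PP$ as a set of reals: by the perfect set property $\PP$ is either countable or contains a perfect set — so in particular any wellorderable poset on $\RR$ is countable — and by the Baire property $\PP$, together with its ordering, agrees with an open set off a meager set. The countable case is classical: the separative quotient of $\PP$, which has no atoms since $\PP$ is non-trivial and is still countable, is forcing equivalent to Cohen forcing, and, as recalled in the introduction, the ground-model reals lose the Baire property in a Cohen extension, so $\AD$ fails in $V[G]$. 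From now on, then, $\PP$ is non-wellorderable, and I would split according to whether $\PP$ adds a new real.

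Suppose first $\PP$ adds no new real, so $\RR^{V[G]}=\RR^{V}$. Here the regularity of $\Theta$ enters through a boundedness argument. First, $\PP$ does not enlarge $\Theta$: if $\dot f$ were a $\PP$-name forced to be a surjection of $\RR$ onto $\Theta^{V}$, then for each real $x$ the set $S_x=\{\xi:(\exists p\in\PP)\ p\Vdash\dot f(\check x)=\check\xi\}$ is a surjective image of a dense subset of $\PP$, hence of $\RR$; being a set of ordinals below $\Theta^{V}$, it is bounded there by regularity, and then $x\mapsto\sup S_x$ is a function from $\RR$ into $\Theta^{V}$ whose range is, again by regularity, bounded — contradicting that $\rng(f)\subseteq\bigcup_x S_x=\Theta^{V}$. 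Running the analogous argument on $\PP$-names for sets of reals, with the Moschovakis Coding Lemma used to view a set of reals as a subset of an ordinal relative to a ground-model prewellordering, I would then conclude that the ground-model sets of reals are Wadge cofinal in $\wp(\RR)^{V[G]}$ — a set-of-reals forcing adding no real cannot push any set of reals past the top of $\wp(\RR)^{V}$. By Martin's theorem the Wadge order of $V[G]$ is wellfounded, so $G$ has a Wadge rank; by Wadge cofinality $G\le_{\text{W}}B$ for some $B\in V$; and since the witnessing continuous reduction is coded by a real, which is then a ground-model real, both it and $B$ lie in $V$. Hence $G\in V$, contradicting $G\notin V$.

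The main obstacle is the remaining case, where $\PP$ adds a new real, as dominating forcing or Sacks forcing on $\RR$ do. Now $\Theta$ can genuinely increase, so the Wadge-cofinality argument is unavailable and one must use the new real itself. The strategy I would pursue is to show that $\RR^{V}$, which has become a proper subset of $\RR^{V[G]}$ that is definable in $V[G]$ from the generic together with a single ground-model set of reals, must violate a structural consequence of $\AD$ in $V[G]$: concretely, by combining the Baire-property analysis of $\PP$ (to represent the generic filter as an essentially Cohen-generic object over a complete Boolean algebra built inside $\wp(\RR)^{V}$) with the Moschovakis Coding Lemma and with the regularity of $\Theta$ (to rerun the two-step boundedness argument above inside $V[G]$ as a fixed-point argument), one should still be able to code the new set of reals — and the new real — by ground-model objects, producing the same contradiction. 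This is where the hypothesis ``$\Theta$ is regular'' genuinely does its work; it is also precisely the hypothesis that Theorem~\ref{thm:the-reals} will later remove, by an argument special to posets that are surjective images of $\RR$ rather than by any refinement of this Wadge-theoretic bookkeeping.
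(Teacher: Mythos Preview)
The paper does not give its own proof of this theorem: it is quoted as a result of Chan and Jackson and the proof line reads simply ``See \cite[Theorem~5.5]{MR4242147}.'' So there is nothing in the paper to compare your argument against directly; the relevant comparison is with the Chan--Jackson paper itself.

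On the content of your proposal: the countable case is fine, and your ``no new real'' case can be made to work along the lines you indicate (once $\Theta^{V}=\Theta^{V[G]}$ is established, Wadge cofinality of $\wp(\RR)^{V}$ in $\wp(\RR)^{V[G]}$ follows, and since continuous reductions are coded by reals one gets $G\in V$). But the third case, where $\PP$ adds a new real, is not proved --- you explicitly call it ``the main obstacle'' and then offer only a heuristic: combine a Baire-category analysis of $\PP$ with the Coding Lemma and regularity of $\Theta$ to ``code the new real by ground-model objects.'' That is a hope, not an argument, and there is no indication of what the actual mechanism would be. Coding the generic real by ground-model data is precisely what must fail if $\PP$ genuinely adds a real, so the contradiction has to come from somewhere else; you have not identified where. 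This is the heart of the theorem, and the regularity hypothesis on $\Theta$ is used in Chan--Jackson's actual proof in a way that your sketch does not capture. As written, then, your proposal leaves the essential case open.
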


\begin{proof}
See \cite[Theorem~5.5]{MR4242147}.
\end{proof}

The following theorems will be used in Section~\ref{sec:subset-of-Theta}:
\begin{thm}[Moschovakis]\label{thm:CodingLemma}
Assume $\ZF + \AD$. Then for any non-zero ordinal $\gamma < \Theta$, there is a set $A$ of reals such that there is a surjection from $\RR$ to $\wp (\gamma)$ which is $\OD$ from $A$. 
\end{thm}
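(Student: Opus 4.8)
The plan is to deduce this from the Moschovakis Coding Lemma. First I would fix the data. Since $1 \le \gamma < \Theta$, there is a surjection $h \colon \RR \twoheadrightarrow \gamma$ (take any surjection of $\RR$ onto some $\delta$ with $\gamma < \delta$ and collapse everything of image $\ge \gamma$ to $0$); let $\prec$ be the prewellordering of $\RR$ it induces, so that $|x|_{\prec} = h(x)$ and $\prec$ has length $\gamma$, and let $A \subseteq \RR$ be a real-set code of $\prec$, say $A = \{ \langle x, y \rangle \mid x \prec y \}$ under a recursive bijection $\RR \times \RR \cong \RR$; then $h$, and everything defined from it below, is $\OD$ from $A$. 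Next I would work with the pointclass $\Gamma = \boldsymbol{\Sigma}^1_1(A)$, which is adequate, $\omega$-parametrized, and closed under $\exists^{\RR}$ — these facts being obtained by relativizing to $A$ the classical facts about $\boldsymbol{\Sigma}^1_1$ — and fix a set $U \subseteq \RR \times \RR$, $\OD$ from $A$, universal for the $\Gamma$-subsets of $\RR$. Writing $U_z = \{ w \mid (z,w) \in U \}$, define $f \colon \RR \to \wp(\gamma)$ by
\[
  f(z) = \{\, h(w) \mid w \in U_z \,\}.
\]
Built from $h$ and $U$, the map $f$ is $\OD$ from $A$, and $f(z) \subseteq \gamma$ for every $z$.

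It remains to see that $f$ is onto $\wp(\gamma)$. Since $A$ codes $\prec$, the relations $\preceq$ and $\prec$ lie in $\boldsymbol{\Delta}^0_1(A) \subseteq \boldsymbol{\Delta}^1_1(A)$, so $h$ is a $\Gamma$-norm of length $\gamma$, and $\mathrm{Det}(\boldsymbol{\Delta}^1_1(A))$ holds because every $\boldsymbol{\Delta}^1_1(A)$ set is a set of reals and $\AD$ is assumed. Hence the Coding Lemma applies to $\Gamma$ and the norm $h$: every $D \subseteq \RR$ has a subset $C$ in $\Gamma$ with $\{ h(w) \mid w \in C \} = \{ h(w) \mid w \in D \}$. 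Given $S \subseteq \gamma$, apply this with $D = h^{-1}(S)$; as $h$ maps onto $\gamma$, $\{ h(w) \mid w \in D \} = S$, so the resulting $C \in \Gamma$ satisfies $\{ h(w) \mid w \in C \} = S$, and by universality of $U$ there is a real $z$ with $U_z = C$, whence $f(z) = S$. Thus $A$ and $f$ witness the theorem.

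The substantive point, and the step I expect to be the main obstacle, is the Coding Lemma itself: for an adequate, $\omega$-parametrized pointclass $\Gamma$ carrying a $\Gamma$-good norm of length $\lambda$ with $\mathrm{Det}(\Gamma \cap \check{\Gamma})$, every set of reals has a subset in $\exists^{\RR}\Gamma$ with the same set of norm-values. I would prove it by induction on $\lambda$ in the usual way: granting the statement for norms of length $< \lambda$, one analyzes a two-player game in which one player tries to play a code for a subset of the given set $D$ that meets every norm-level $D$ meets, while the opponent tries to name a level left uncovered; determinacy of this game, whose payoff set is essentially in $\Gamma \cap \check{\Gamma}$, gives a winning strategy on one side, and the Recursion Theorem — arranged so that the initial-segment approximations to the code may refer to the code itself — turns a win for the first player directly, and a win for the opponent via the induction hypothesis, into the desired coding set, with no use of choice. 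Since the result is due to Moschovakis, I would instead simply invoke \cite[Section~7D]{new_Moschovakis}, whose hypotheses — in particular $\mathrm{Det}(\boldsymbol{\Delta}^1_1(A))$ — hold under $\ZF + \AD$. A minor bookkeeping point to watch is checking that $\boldsymbol{\Sigma}^1_1(A)$ is genuinely $\omega$-parametrized and that $h$ is a $\Gamma$-good norm of length $\gamma$ for $\gamma$ an arbitrarily large ordinal below $\Theta$; this is exactly where it matters that $A$ is a set of reals rather than a single real.
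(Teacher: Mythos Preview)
Your proposal is correct and follows essentially the same approach as the paper: both take $A$ to be (a code for) a prewellordering on $\RR$ of length $\gamma$, observe that the induced surjection $\RR \to \gamma$ is $\OD$ from $A$, and then invoke the Moschovakis Coding Lemma to produce a surjection from $\RR$ onto $\wp(\gamma)$ that is $\OD$ from $A$. The paper simply cites \cite[28.15~Theorem]{MR1994835} for this last step, whereas you spell out the pointclass $\boldsymbol{\Sigma}^1_1(A)$, the universal set $U$, and the explicit map $f(z) = \{ h(w) \mid w \in U_z \}$; the content is the same.
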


\begin{proof}
For any surjection $\rho \colon \RR \to \gamma$, the arguments in \cite[28.15~Theorem]{MR1994835} give us a surjection from $\RR$ to $\wp (\gamma)$ which is OD from $\rho$. If $A$ is a prewellordering on $\RR$ of length $\gamma$, then the surjection $\rho \colon \RR \to \gamma$ induced from $A$ is clearly OD from $A$. Hence there is a surjection from $\RR$ to $\wp (\gamma)$ which is OD from $A$, as desired.
\end{proof}

\begin{thm}[Woodin]\label{thm:AD+ADRfail}
Assume $\ZF+\AD^+ + \lq\lq V = \LL \bigl(\wp (\mathbb{R})\bigr)"$. Suppose also that $\AD_{\RR}$ fails. Then there is a set $T$ of ordinals such that $V = \LL (T, \RR)$.
\end{thm}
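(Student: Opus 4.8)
The plan is to produce a single set of reals $A_{0}$ with $V=\LL(A_{0},\RR)$ and then to replace $A_{0}$ by a set of ordinals. To obtain $A_{0}$ I would use the Solovay sequence $\langle\theta_{\alpha}:\alpha\le\Omega\rangle$, where $\theta_{0}$ is the supremum of the ordinals that are surjective images of $\RR$ via an $\OD$ function, $\theta_{\alpha+1}$ is the supremum of the ordinals that are surjective images of $\RR$ via a function $\OD$ from a set of reals of Wadge rank $\theta_{\alpha}$ and a real, $\theta_{\lambda}=\sup_{\alpha<\lambda}\theta_{\alpha}$ at limits, and $\theta_{\Omega}=\Theta$. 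By a theorem characterizing $\AD_{\RR}$ in terms of this sequence (Solovay, with the converse due to Woodin, both valid under $\ZF+\AD^{+}$ and the hypothesis $V=\LL(\wp(\RR))$), the axiom $\AD_{\RR}$ holds if and only if $\Omega$ is a limit ordinal. Since $\AD_{\RR}$ fails, $\Omega$ is $0$ or a successor ordinal, so we may fix a set of reals $A_{0}$ --- namely $A_{0}=\emptyset$ when $\Omega=0$, and any set of reals of Wadge rank $\theta_{\beta}$ when $\Omega=\beta+1$ --- such that $\Theta$ is the supremum of the ordinals that are surjective images of $\RR$ via functions $\OD$ from $A_{0}$ and a real.

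The main step is to prove $\wp(\RR)\subseteq\LL(A_{0},\RR)$, which gives $V=\LL(\wp(\RR))=\LL(A_{0},\RR)$. I would use two elementary observations. First, by Woodin's theorem that $\AD^{+}$ transfers down to $\LL(A,\RR)$ for every set of reals $A$, the inner model $\LL(A_{0},\RR)$ satisfies $\AD^{+}$ and has the same reals as $V$; in particular its Wadge hierarchy has order type $\Theta^{\LL(A_{0},\RR)}$. Second, if $D\in\wp(\RR)^{V}$ and $D\le_{\text{W}}C$ for some $C\in\LL(A_{0},\RR)$, then $D\in\LL(A_{0},\RR)$, since a continuous function witnessing $D\le_{\text{W}}C$ is coded by a real and $C,\RR\in\LL(A_{0},\RR)$. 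As the Wadge order is absolute between $\LL(A_{0},\RR)$ and $V$, these two facts together with Wadge's Lemma show that $\wp(\RR)^{\LL(A_{0},\RR)}$ is exactly the collection of sets of reals of $V$ whose Wadge rank lies below $\Theta^{\LL(A_{0},\RR)}$; hence $\wp(\RR)^{\LL(A_{0},\RR)}=\wp(\RR)^{V}$ is equivalent to $\Theta^{\LL(A_{0},\RR)}=\Theta^{V}$. This equality is the heart of the theorem. The inequality $\Theta^{\LL(A_{0},\RR)}\le\Theta^{V}$ is immediate; for the reverse, the point is that, by the choice of $A_{0}$ in the first step, every $\delta<\Theta^{V}$ is the length of a prewellordering of $\RR$ that is $\OD$ from $A_{0}$ and a real in $V$, and one must show that such a prewellordering already belongs to $\LL(A_{0},\RR)$ --- intuitively because the defining property of $A_{0}$ precludes a set of reals outside $\LL(A_{0},\RR)$, as such a set would extend the Wadge hierarchy past the ordinals reachable by definitions from $A_{0}$ and a real. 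Making this rigorous amounts to a $\Sigma^{2}_{1}$-reflection between $\LL(A_{0},\RR)$ and $V$, and it is here that $\AD^{+}$ (through the downward transfer of $\AD^{+}$, the theory of $\infty$-Borel representations, and reflection phenomena for models of $\AD^{+}$) and the hypothesis $V=\LL(\wp(\RR))$ are used in an essential way.

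Finally I would replace the set of reals $A_{0}$ by a set of ordinals. By $\AD^{+}$, $A_{0}$ is $\infty$-Borel, so fix an $\infty$-Borel code $S\subseteq\Ord$ for $A_{0}$; then $A_{0}$ is defined from $S$ and a real by an absolute formula, so $A_{0}\in\LL(S,\RR)$ and hence $\LL(A_{0},\RR)\subseteq\LL(S,\RR)$, while $\LL(S,\RR)\subseteq\LL(\wp(\RR))=V$ because $S,\RR\in V$. Combining with the main step gives $V=\LL(A_{0},\RR)\subseteq\LL(S,\RR)\subseteq V$, so $V=\LL(S,\RR)$ and $T:=S$ is as required. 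The main obstacle is exactly the equality $\Theta^{\LL(A_{0},\RR)}=\Theta^{V}$ --- equivalently, the reflection argument needed to locate the $\OD$-from-$A_{0}$-and-a-real prewellorderings inside $\LL(A_{0},\RR)$; by contrast, the reduction to the Solovay sequence, the Wadge-theoretic bookkeeping, and the $\infty$-Borel coding are all comparatively routine.
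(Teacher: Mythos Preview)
Your outline is sound and follows a route genuinely different from the paper's. The paper's proof is essentially a chain of citations to Woodin: from $\AD^{+}$ and the failure of $\AD_{\RR}$ one deduces that there is a largest Suslin cardinal below $\Theta$; from $\AD^{+}$ together with $V=\LL\bigl(\wp(\RR)\bigr)$ one deduces that the Martin-measure ultrapower $V^{\mathcal{D}}/\mu$ is wellfounded; and Woodin's analysis then yields a set $T$ of ordinals with $\wp(\RR)\subseteq\LL(T,\RR)$. Your approach via the Solovay sequence --- picking $A_{0}$ of Wadge rank $\theta_{\beta}$ when $\Omega=\beta+1$, reducing everything to the identity $\Theta^{\LL(A_{0},\RR)}=\Theta^{V}$, and finally replacing $A_{0}$ by an $\infty$-Borel code --- is the other standard way to organize this result and has the virtue of making explicit where the real content lies.

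That said, the step you flag as the ``main obstacle'' is genuinely one, and your proposal does not discharge it: you correctly isolate the equality $\Theta^{\LL(A_{0},\RR)}=\Theta^{V}$ as the heart of the matter but then gesture at ``$\Sigma^{2}_{1}$-reflection'' without supplying an argument. What must be proved is that a prewellordering which is $\OD$ in $V$ from $A_{0}$ and a real already belongs to $\LL(A_{0},\RR)$, and this is itself a theorem of Woodin's $\AD^{+}$ theory (one formulation: under $\AD^{+}$, a set of reals $\OD$ from $A_{0}$ and a real $x$ has an $\infty$-Borel code in $\HOD_{\{A_{0},x\}}$, hence lies in $\LL(A_{0},\RR)$). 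With that cited, your sketch becomes a proof; without it, it is an accurate roadmap with the hard step outsourced. In fairness, the paper's own proof also outsources the hard step to Woodin's unpublished work, so neither presentation is self-contained --- yours simply makes the dependency more visible and packages the same underlying structure theory differently.
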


\begin{proof}
By the results of Woodin~\cite{ADplus}, the axiom $\AD^+$ and the failure of $\AD_{\RR}$ imply that the set of Suslin cardinals is closed below $\Theta$ while not cofinal in $\Theta$. Hence there is a largest Suslin cardinal in $\Theta$. 
By the result of Woodin~\cite[Corollary~6]{ADplusreflection}, the assumptions $\AD^+$ and $\lq\lq V = \LL \bigl(\wp (\mathbb{R})\bigr)"$ imply that the ultrapower $V^{\mathcal{D}} / \mu$ is well-founded where $\mathcal{D}$ is the set of Turing degrees and $\mu$ is the Martin measure on $\mathcal{D}$. 
Using the result of Woodin~\cite{ADplus}, it follows that there is a set $T$ of ordinals such that $\wp (\RR) \subseteq \LL  (T, \RR)$. Since we assume $V = \LL \bigl( \wp (\RR) \bigr)$, we have $V = \LL  (T, \RR)$, as desired.
\end{proof}

\begin{thm}[Woodin]\label{thm:V=LTR}
Assume $\ZF + \AD^+ + \lq\lq V = \LL (T, \RR)$'' for some set $T$ of ordinals. Then 
\begin{enumerate}
\item for some subset $Z$ of $\Theta$, we have $\HOD_{\{ T \}} = \LL [T, Z]$, and

\item for any real $x$, we have $\HOD_{\{T, x\}} = \HOD_{ \{ T \}} [ x]$.
\end{enumerate}
\end{thm}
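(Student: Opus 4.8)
The plan is to prove the two clauses separately: clause~(1) by the standard analysis of $\HOD$ under $\AD^+$ --- the region below $\Theta$ via the Coding Lemma and the region at and above $\Theta$ via Woodin's structure theory for $\HOD$ --- and clause~(2) by the Vop\v{e}nka homogeneity of $\HOD_{\{T\}}$, built on clause~(1). For clause~(1) one first records that $\HOD_{\{T\}}$ is a model of $\ZFC$ in which $V = \HOD_{\{T\}}$, so $\HOD_{\{T\}} = \LL[T, \vec A]$ where $\vec A$ enumerates all sets of ordinals that are $\OD$ from $T$; it then suffices to produce a single $Z \subseteq \Theta$ that is $\OD$ from $T$ with every set of ordinals of $\HOD_{\{T\}}$ lying in $\LL[T, Z]$. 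Below $\Theta$ this is quick: for each $\gamma < \Theta$ the set $\wp(\gamma) \cap \HOD_{\{T\}}$ carries an $\OD$-from-$T$ wellordering, and composing a surjection $\RR \twoheadrightarrow \wp(\gamma)$ as supplied by Theorem~\ref{thm:CodingLemma} with the rank function of that wellordering forces its length below $\Theta$; gluing these enumerations uniformly in $\gamma$ yields an $\OD$-from-$T$ surjection of some ordinal $\le \Theta$ onto the class of bounded-below-$\Theta$ sets of ordinals of $\HOD_{\{T\}}$, and I would let $Z_0 \subseteq \Theta$, $\OD$ from $T$, code this surjection together with the induced membership relation, so that $\LL[T, Z_0]$ captures exactly those sets.

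The substance of clause~(1) is the region at and above $\Theta$: one must see that $\HOD_{\{T\}}$ is $\LL$-like enough there that a single subset of $\Theta$ captures everything. Here I would invoke Woodin's analysis of $\HOD$ in $\AD^+$-models: from $V = \LL(T, \RR)$ and $\AD^+$ one has the wellfoundedness of the Martin-measure ultrapower used in the proof of Theorem~\ref{thm:AD+ADRfail}, and the fine-structural description of $\HOD_{\{T\}}$ then yields both that $\HOD_{\{T\}} \models \mathsf{GCH}$ and that every set of ordinals of $\HOD_{\{T\}}$ is constructible from $T$ together with the restriction to $\Theta$ of the extender-and-iteration-strategy predicate of $\HOD_{\{T\}}$; since $\Theta$ is a limit cardinal of $\HOD_{\{T\}}$ by Theorem~\ref{thm:Moschovakis} and the relevant part of that predicate lives below $\Theta$, this restriction is a subset of $\Theta$, and amalgamating it with $Z_0$ into one $Z \subseteq \Theta$ completes the clause. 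I expect this to be the main obstacle --- promoting the $\AD^+$-structure of $\LL(T, \RR)$ to the statement that $\HOD_{\{T\}}$ above $\Theta$ is generated over $T$ by a single subset of $\Theta$ --- and I would either quote it from the inner-model theory of $\HOD$ in $\AD^+$-models or reproduce that argument.

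For clause~(2), fix a real $x$ and, using clause~(1), write $\HOD_{\{T\}} = \LL[T, Z]$ with $Z \subseteq \Theta$. By Vop\v{e}nka's theorem relative to $T$, $x$ is generic over $\HOD_{\{T\}}$ for the Vop\v{e}nka algebra $\mathbb B$ associated with subsets of $\omega$, a set forcing that belongs to $\HOD_{\{T\}}$; the corresponding generic filter $G_x$ is definable from $x$ over $\HOD_{\{T\}}$ and, because the basic clopen subsets of $\wp(\omega)$ separate distinct reals, $x$ is in turn recovered from $G_x$, so $\HOD_{\{T\}}[x] := \HOD_{\{T\}}[G_x]$ contains $x$. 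For the inclusion $\HOD_{\{T,x\}} \subseteq \HOD_{\{T\}}[x]$: given $A \subseteq \gamma$ that is $\OD$ from $T$ and $x$, say $A = \{\alpha < \gamma : \psi(\alpha, T, x, \delta)\}$, the map $\alpha \mapsto \{z \in \wp(\omega) : \psi(\alpha, T, z, \delta)\}$ is $\OD$ from $T$, so the associated $\mathbb B$-name lies in $\HOD_{\{T\}}$ and is evaluated by $G_x$ to $A$, whence $A \in \HOD_{\{T\}}[G_x]$. The reverse inclusion is immediate, since $Z$ and $x$ are $\OD$ from $T, x$, giving $\HOD_{\{T\}}[x] = \LL[T, Z, x] \subseteq \HOD_{\{T,x\}}$. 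Hence $\HOD_{\{T,x\}} = \HOD_{\{T\}}[x]$; beyond clause~(1), this clause is routine Vop\v{e}nka-style homogeneity and I anticipate no serious difficulty in it.
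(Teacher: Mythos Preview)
Your proposal is correct and is in line with the standard approach; note, however, that the paper itself gives essentially no proof of this theorem, merely citing \cite[Corollary~7.21]{MR4132099} for clause~(1) and \cite{ADplus} for clause~(2). Your Vop\v{e}nka argument for clause~(2) is exactly the technique the paper spells out in Lemma~\ref{lem:ADR-countable-sequences}(2) (there for $\gamma^{\omega}$ rather than $2^{\omega}$), and your outline for clause~(1) --- Coding Lemma below $\Theta$, the $\HOD$ analysis above --- is the expected route; you are right that the substantive content lies entirely in quoting the inner-model-theoretic description of $\HOD_{\{T\}}$ above $\Theta$, which is precisely what the paper's citation stands in for.
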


\begin{proof}
For (1), one can argue in the same way as in \cite[Corollary~7.21]{MR4132099}. 

For (2), see \cite{ADplus}.
\end{proof}

We next introduce Vop\v{e}nka algebras and their variants we will use in this paper:
\begin{defn}\label{def:Vop}
Let $\gamma$ be a non-zero ordinal and $T$ be a set of ordinals. 
\begin{enumerate}
\item Let $n$ be a natural number with $n \ge 1$ and $\mathcal{O}_n$ be the collection of all nonempty subsets of $(\gamma^{\omega})^n$ which are OD from $T$. Fix a bijection $\pi_n \colon \eta \to \mathcal{O}_n$ which is OD from $T$, where $\eta$ is some ordinal. Let $\QQ_n$ be the poset on $\eta$ such that for each $p , q$ in $\QQ_n$, we have $p \le q$ if $\pi_n (p) \subseteq \pi_n (q)$.  We call $\QQ_n$ the{\it Vop\v{e}nka algebra for adding an element of $(\gamma^{\omega})^n$ in $\HOD_{\{ T \}}$}.

\item For all natural numbers $\ell$ and $m$ with $1 \le \ell \le m$, let $i_{\ell , m} \colon \QQ_{\ell} \to \QQ_m$ be the inclusion map induced from $\pi_{\ell}$ and $\pi_m$, i.e., for all $p \in \QQ_{\ell}$, $\pi_m \bigl( i_{\ell , m} (p)\bigr) = \{ x \in (\gamma^{\omega})^m \mid x \upharpoonright \ell \in \pi_{\ell} (p) \}$. Then each $i_{\ell , m}$ is a complete embedding between posets. Let $\bigl(\QQ_{\omega} , (i_n \colon \QQ_n \to \QQ_{\omega} \mid n < \omega) \bigr)$ be the direct limit of the system $( i_{\ell , m} \colon \QQ_{\ell} \to \QQ_m \mid 1 \le \ell \le m < \omega)$. We call $\QQ_{\omega}$ the {\it finite support direct limit of Vop\v{e}nka algebras for adding an element of $\gamma^{\omega}$ in $\HOD_{\{ T \}}$}.
\end{enumerate}

\end{defn}

The following lemmas will be useful in Section~\ref{sec:subset-of-Theta}:
\begin{lem}\label{lem:Qomega}
Assume $\ZF + \AD^+ + \lq\lq V = \LL (T, \RR)$'' for some set $T$ of ordinals. 
\begin{enumerate}
\item Let $\QQ_1$ be the Vop\v{e}nka algebra for adding an element of $2^{\omega}$ in $\HOD_{\{ T \}}$. Then the poset $\QQ_1$ is of size at most $\Theta$ and $\QQ_1$ has the $\Theta$-c.c. in $\HOD_{\{ T \}}$.

\item Let $\QQ_{\omega}$ be the finite support limit of the Vop\v{e}nka algebras for adding an element of $2^{\omega}$ in $\HOD_{\{ T \}}$.
Then $\QQ_{\omega}$ has the $\Theta$-c.c. in $\HOD_{\{ T \}}$.

\item (Woodin) 
There is a $\QQ_{\omega}$-generic filter $H$ over $\HOD_{\{ T \}}$ such that $ V = \LL (T, \RR) \subseteq  \HOD_{\{ T \}}[H]$ and the set $\RR^V$ is countable in $\HOD_{\{ T \}} [H]$. 
\end{enumerate}
\end{lem}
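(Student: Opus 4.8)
The plan is to prove Lemma~\ref{lem:Qomega} in three parts, treating the $\Theta$-c.c. statements~(1) and~(2) as essentially a counting/definability argument and the genericity statement~(3) as a Vop\v{e}nka-style forcing-theorem argument due to Woodin.

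\textbf{Parts (1) and (2).} For~(1), I would observe that under $\AD^+$ the Moschovakis Coding Lemma (in the form of Theorem~\ref{thm:CodingLemma}) provides, for each $\gamma < \Theta$, a surjection from $\RR$ onto $\wp(\gamma)$ that is $\OD$ from a set of reals. Applied inside $\LL(T,\RR)$ together with the relevant $\OD_{\{T\}}$-uniformization, this shows that the collection $\mathcal{O}_1$ of nonempty $\OD_{\{T\}}$ subsets of $2^{\omega}$ has size at most $\Theta$ in $\HOD_{\{T\}}$: each such set is coded by a real-parameter together with an ordinal and a formula, and the Coding Lemma bounds the number of $\OD$-from-a-set-of-reals subsets of $2^{\omega}$ by $\Theta$. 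Hence $\QQ_1$, being a poset on the ordinal $\eta = |\mathcal{O}_1| \le \Theta$, has size at most $\Theta$; it then has the $\Theta$-c.c.\ in $\HOD_{\{T\}}$ trivially from the size bound if $\Theta$ is regular in $\HOD_{\{T\}}$, but the cleaner route (which also gives the $\Theta$-c.c.\ in general) is the usual Vop\v{e}nka observation: an antichain in $\QQ_1$ corresponds to a family of pairwise-disjoint nonempty $\OD_{\{T\}}$ subsets of $2^{\omega}$, and each such set contains an $\OD_{\{T\}}$-in-a-real element, so an antichain of size $\Theta$ would yield a surjection from $\RR$ onto $\Theta$, contradiction. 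For~(2), $\QQ_{\omega}$ is the finite-support direct limit of the $\QQ_n$, and each $\QQ_n$ (the Vop\v{e}nka algebra for $(2^{\omega})^n$) satisfies the same $\Theta$-c.c.\ argument since $(2^{\omega})^n$ is in bijection with $2^{\omega}$; a finite-support direct limit of posets each of which has the $\Theta$-c.c.\ and over which $\Theta$ stays regular preserves the $\Theta$-c.c., by the standard $\Delta$-system style argument (any antichain in the limit is, modulo refining, an antichain whose conditions live in a single $\QQ_n$ via the complete embeddings $i_n$, because $\Theta$ has uncountable cofinality — indeed $\Theta$ is a limit of measurables by Theorem~\ref{thm:Moschovakis}, so $\mathrm{cf}(\Theta) > \omega$).

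\textbf{Part (3).} This is the genericity statement: I want a $\QQ_{\omega}$-generic $H$ over $\HOD_{\{T\}}$ with $V = \LL(T,\RR) \subseteq \HOD_{\{T\}}[H]$ and $\RR^V$ countable in $\HOD_{\{T\}}[H]$. The key point is the Vop\v{e}nka phenomenon: the generic object for $\QQ_n$ is, by construction, an element of $(2^{\omega})^n$, and the generic object for $\QQ_{\omega}$ is an $\omega$-sequence of reals. Concretely, fix in $V$ an enumeration $\langle x_k : k < \omega \rangle$ — wait, there is no wellordering of $\RR^V$ in $V$, so instead I would work externally: enumerate $\RR^V$ in order type some ordinal (this is not available in $V$ either), so the correct move is Woodin's: one shows that for \emph{any} real $x \in V$, the filter $G_x = \{ p \in \QQ_1 : x \in \pi_1(p) \}$ is $\QQ_1$-generic over $\HOD_{\{T\}}$ (genericity uses exactly that $\QQ_1$ is the Vop\v{e}nka algebra and that every dense set is $\OD_{\{T\}}$, hence its union is an $\OD_{\{T\}}$ subset of $2^{\omega}$, hence co-small, and since $x$ is arbitrary it meets the dense set). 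Then, to get a single generic capturing all of $\RR^V$ and making it countable, I would use the direct-limit structure: a $\QQ_{\omega}$-generic $H$ gives an $\omega$-sequence $\langle y_k : k < \omega\rangle$ of reals, and by a density argument inside $\HOD_{\{T\}}$ (using that $\RR^V$ as computed by $\QQ_\omega$-names is, via the Coding Lemma and $V = \LL(T,\RR)$, exactly the reals realized along the generic), one arranges that every real of $V$ appears among the $y_k$. This is precisely where $V = \LL(T,\RR)$ is used: it guarantees that $V$ is reconstructed from $T$ together with its reals, so once $\HOD_{\{T\}}[H]$ contains all of $\RR^V$ (as a countable set) it contains $V$. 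The formal statement and proof are Woodin's; I would cite~\cite{ADplus} (or the presentation in~\cite{NamThesis}) for the details, and in the body sketch the genericity of $G_x$ and the density argument producing a surjection $\omega \to \RR^V$ in $\HOD_{\{T\}}[H]$.

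\textbf{Main obstacle.} The routine parts are the $\Theta$-c.c.\ bounds in~(1)--(2); the real content — and the step I expect to require the most care — is~(3), specifically the verification that the generic sequence of reals can be made to enumerate \emph{all} of $\RR^V$ while remaining genuinely $\QQ_\omega$-generic over $\HOD_{\{T\}}$. The delicate point is that $\RR^V$ is not wellordered in $V$, so one cannot naively "feed in" an enumeration; instead one must argue that the collection of $\QQ_\omega$-conditions forcing a given real of $V$ to appear is dense and $\OD_{\{T\}}$, and then invoke that the corresponding external filter (read off from the true reals of $V$ arranged along a generic-for-$V$ listing) is generic — this is where Woodin's argument, via the homogeneity and the Coding Lemma, does the work, and I would lean on~\cite{ADplus} rather than reprove it.
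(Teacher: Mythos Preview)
Your treatment of the $\Theta$-c.c.\ in~(1) and~(2) and the external citation for~(3) are broadly in line with the paper, but your argument for the size bound $|\QQ_1|\le\Theta$ in~(1) has a genuine gap. The Coding Lemma (Theorem~\ref{thm:CodingLemma}) produces, for each ordinal $\gamma<\Theta$, a surjection $\RR\to\wp(\gamma)$; it says nothing about the number of $\OD_{\{T\}}$ subsets of $2^{\omega}$. Your assertion that ``the Coding Lemma bounds the number of $\OD$-from-a-set-of-reals subsets of $2^\omega$ by $\Theta$'' is false as written (under $\AD_{\RR}$ every set of reals is $\OD$ from a set of reals, and there are more than $\Theta$ of them), and even restricted to $\OD_{\{T\}}$ sets the Coding Lemma gives no such bound. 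The paper instead uses the Wadge hierarchy: setting $W_\alpha=\{B\in\mathcal{O}_1 : |B|_{\mathrm{W}}=\alpha\}$, one has $\mathcal{O}_1=\bigcup_{\alpha<\Theta}W_\alpha$; each $W_\alpha$ is a surjective image of $\RR$, and since $\mathcal{O}_1$ is already wellordered (via $\pi_1$), each $W_\alpha$ is a wellordered surjective image of $\RR$, hence of ordinal length $<\Theta$. This yields a surjection $\Theta\times\Theta\to\mathcal{O}_1$ that is $\OD$ from $T$, giving $|\QQ_1|\le\Theta$.

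Two smaller remarks. For the $\Theta$-c.c.\ in~(1), the detour through ``each $\OD_{\{T\}}$ set contains an $\OD_{\{T\}}$ real'' is unnecessary and requires a basis theorem you do not justify; the paper simply observes that $\Theta$-many pairwise-disjoint nonempty subsets of $2^{\omega}$ directly give a surjection $\RR\to\Theta$ (map $x$ to the index of the set containing it). For~(2), the paper appeals to the regularity of $\Theta$ in $\LL(T,\RR)$ rather than merely $\mathrm{cf}(\Theta)>\omega$; your weaker hypothesis does suffice for the pigeonhole into a single $\QQ_n$, so this is a harmless variation. For~(3), the paper cites~\cite{MR2463615} (specifically Lemmas~3.4 and~3.5 there) rather than~\cite{ADplus} or~\cite{NamThesis}; your sketch of the Vop\v{e}nka genericity and the density argument for enumerating $\RR^V$ is the right shape, and you correctly identify this as the part requiring care.
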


\begin{proof}
For (1), we first show that the poset $\QQ_1$ is of size at most $\Theta$ in $\HOD_{\{ T \}}$. Recall from Definition~\ref{def:Vop} that $\QQ_1$ is a poset on some ordinal $\eta$ and $\pi_1$ is a bijection from $\eta$ to $\mathcal{O}_1$ which is OD from $T$, where $\mathcal{O}_1$ is the collection of all subsets of $2^{\omega}$ which are OD from $T$. We will argue that the ordinal $\eta$ is at most $\Theta$. For each $\alpha < \Theta$, let $W_{\alpha}$ be the collection of sets of reals in $\mathcal{O}_1$ of Wadge rank $\alpha$. Then we have $\mathcal{O}_1 = \bigcup_{\alpha < \Theta} W_{\alpha}$ and each $W_{\alpha}$ is a surjective image of $\RR$. Since the set $\mathcal{O}_1$ is well-ordered, so is each $W_{\alpha}$ and there is a surjection from $\Theta$ to $W_{\alpha}$ which is OD from $T$. Hence there is a surjection from $\Theta \times \Theta$ to $\mathcal{O}_1$ which is OD form $T$, and therefore the set $\QQ_1$ is of size at most $\Theta$ in $\HOD_{\{ T \}}$, as desired.

We next show that the poset $\QQ_1$ has the $\Theta$-c.c. in $\HOD_{\{ T \}}$. To derive a contradiction, suppose that there is an antichain $(p_{\alpha} \mid \alpha < \Theta)$ in $\QQ_1$ in $\HOD_{ \{ T \}}$. Then the family $\{ \pi_1 (p_{\alpha}) \mid \alpha < \Theta \}$ is a pairwise disjoint family of nonempty subsets of $2^{\omega}$, which would easily induce a surjection from $\RR$ to $\Theta$, contradicting the definition of $\Theta$. Therefore, the poset $\QQ_1$ has the $\Theta$-c.c. in $\HOD_{\{ T \}}$, as desired.

For (2), we first note that for all natural numbers $n$ with $n\ge 1$, the poset $\QQ_n$ has the $\Theta$-c.c. in $\HOD_{\{ T \}}$ by the same argument as in (1). Then using the facts that $\Theta$ is regular in $V = \LL (T, \RR)$ and that $\QQ_{\omega}$ is the direct limit of $\QQ_n$s, it follows that the poset $\QQ_{\omega}$ has the $\Theta$-c.c. in $\HOD_{\{ T \}}$ as well.


For (3), one can argue in the same way as in \cite[Lemma~3.4 and Lemma~3.5]{MR2463615} by replacing $\mathcal{M}$ with $V$, and $\mathcal{H}$ with $\HOD_{\{ T \}}$.
\end{proof}

\begin{lem}\label{lem:useful}
Assume $\mathsf{ZFC}$. Let $\lambda$ be a regular uncountable cardinal, $\mathbb{P}$ be a $<$$\lambda$-closed poset, and $\mathbb{Q}$ be a $\lambda$-c.c. poset. Then for any $\mathbb{P}$-generic filter $G$ over $V$, the poset $\mathbb{Q}$ still has the $\lambda$-c.c. in $V[G]$. Furthermore, if $H$ is a $\QQ$-generic filter over $V$, then $H$ is $\QQ$-generic over $V[G]$ as well.
\end{lem}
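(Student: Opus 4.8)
The plan is to prove the two assertions in turn, with the first feeding into the second. For the preservation of the $\lambda$-c.c., I would argue by contradiction. Suppose some $p\in\PP$ forces that $\dot f$ is an injection from $\check\lambda$ into $\check\QQ$ whose range is an antichain of $\QQ$ (such a $p$ and such a name exist whenever $\QQ$ fails the $\lambda$-c.c.\ in some generic extension, since then there is an antichain of size exactly $\lambda$). Working below $p$ and using that $\PP$ is $<$$\lambda$-closed, I would construct by recursion a $\le_{\PP}$-decreasing sequence $\langle p_\alpha\mid\alpha<\lambda\rangle$ below $p$ together with conditions $q_\alpha\in\QQ$ so that $p_{\alpha+1}\Vdash \dot f(\check\alpha)=\check q_\alpha$: at a limit stage $\alpha<\lambda$ the sequence $\langle p_\beta\mid\beta<\alpha\rangle$ has length $<\lambda$, so $<$$\lambda$-closure yields a lower bound $p_\alpha$, and at a successor stage one extends $p_\alpha$ to a condition deciding the value $\dot f(\check\alpha)$.

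Then I would verify that $\{q_\alpha\mid\alpha<\lambda\}$ is an antichain of $\QQ$ of size $\lambda$ already in $V$. For $\alpha<\beta<\lambda$ the condition $p_{\beta+1}$ refines $p_{\alpha+1}$ and forces $\dot f(\check\alpha)=\check q_\alpha$, $\dot f(\check\beta)=\check q_\beta$, that $\dot f$ is injective, and that $\ran(\dot f)$ is an antichain; hence $p_{\beta+1}\Vdash \check q_\alpha\ne\check q_\beta$ and $p_{\beta+1}\Vdash \check q_\alpha\perp_{\QQ}\check q_\beta$. Since $q_\alpha=q_\beta$ is absolute and since a common extension of $q_\alpha$ and $q_\beta$ in $V$ would remain a common extension in $V[G]$, we conclude that $q_\alpha\ne q_\beta$ and $q_\alpha\perp_{\QQ}q_\beta$ hold in $V$. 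This produces an antichain of $\QQ$ of size $\lambda$ in $V$, contradicting the $\lambda$-c.c.\ of $\QQ$ in $V$, which proves the first assertion.

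For the second assertion, let $H$ be $\QQ$-generic over $V$ and let $A\in V[G]$ be a maximal antichain of $\QQ$. By the first assertion $\QQ$ has the $\lambda$-c.c.\ in $V[G]$, so $A$ has size $<\lambda$ there. Fixing in $V$ a bijection between $\QQ$ and $|\QQ|$, the set $A$ is coded by a sequence of ordinals of length $<\lambda$ (its increasing enumeration, of length $<\lambda$ since $\lambda$ is a cardinal) lying in $V[G]$. A $<$$\lambda$-closed poset adds no new $<$$\lambda$-sequences of ordinals, by the usual density argument (which again uses $<$$\lambda$-closure at limit stages to amalgamate the conditions built so far), so $A\in V$. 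As $H$ is $\QQ$-generic over $V$, it meets every maximal antichain of $\QQ$ lying in $V$, in particular it meets $A$; since $A$ was an arbitrary maximal antichain of $\QQ$ in $V[G]$ and being a filter is absolute, $H$ is $\QQ$-generic over $V[G]$.

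The only real content lies in the first assertion; the second is then routine. The step to be careful with is the recursion in the first part: one must check that the limit stages go through — this is precisely where $<$$\lambda$-closure together with $\alpha<\lambda$ is used — and that the resulting sequence $\langle q_\alpha\mid\alpha<\lambda\rangle$ is genuinely an antichain in $V$, not merely in $V[G]$, which is what the absoluteness remarks secure.
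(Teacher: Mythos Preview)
Your proof is correct and follows essentially the same approach as the paper's: both build a $\le_{\PP}$-decreasing $\lambda$-sequence using $<$$\lambda$-closure to decide $\lambda$ many members of the putative large antichain and thereby produce a $\lambda$-antichain of $\QQ$ in $V$, and both derive the second assertion from the first by observing that small antichains in $V[G]$ already lie in $V$. The only cosmetic difference is that you work with a name $\dot f$ for an injective enumeration of the antichain while the paper works directly with a name $\dot A$ for the antichain and picks distinct elements one at a time.
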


\begin{proof}
Let $G$ be a $\mathbb{P}$-generic filter and $A$ be an antichain in $\mathbb{Q}$ in $V[G]$. We will show that $A$ is of size less than $\lambda$ in $V[G]$.

Towards a contradiction, we assume that $A$ is of size at least $\lambda$ in $V[G]$. 

Let $\dot{A}$ be a $\mathbb{P}$-name with $\dot{A}^G = A$. Let $p$ be a condition in $G$ with $p \Vdash_{\mathbb{P}} \lq\lq \dot{A}$ is an antichain in $\check{\mathbb{Q}}$ of size at least $\check{\lambda} "$. Using the $<$$\lambda$-closure of $\mathbb{P}$ in $V$, one can construct a decreasing sequence $(p_{\alpha} \mid \alpha < \lambda)$ in $\mathbb{P}$ and a sequence $(a_{\alpha} \mid \alpha < \lambda)$ in $\mathbb{P}$ with the following properties: 
\begin{enumerate}
\item $p_0 = p$,

\item for all $\alpha , \beta < \lambda$ with $\alpha \neq \beta$, we have $a_{\alpha} \neq a_{\beta}$, and

\item for all $\alpha < \lambda$, $p_{\alpha} \Vdash_{\mathbb{P}} \lq\lq \check{a}_{\alpha} \in \dot{A}"$. 
\end{enumerate}

Since $p \Vdash_{\mathbb{P}} \lq\lq \dot{A} \text{ is an antichain in } \check{\mathbb{Q}} "$, by (1), (2), and (3) above, for all $\alpha , \beta < \lambda$ with $\alpha < \beta$, the condition $p_{\beta}$ forces that $a_{\alpha}$ and $a_{\beta}$ are incompatible in $\mathbb{P}$. Therefore, the set $B = \{ a_{\alpha} \mid \alpha < \lambda \}$ is an antichain in $\mathbb{Q}$ of size $\lambda$ in $V$. This contradicts the assumption that $\mathbb{Q}$ has the $\lambda$-c.c. in $V$. 
Therefore, the antichain $A$ is of size less than $\lambda$ in $V[G]$, as desired.

Let $H$ be a $\QQ$-generic filter over $V$. We will verify that $H$ is $\QQ$-generic over $V[G]$ as well. Let $A$ be a maximal antichain in $\QQ$ in $V[G]$. We will see that $H \cap A \neq \emptyset$. By the arguments in the previous paragraphs, $A$ is of size less than $\lambda$ in $V[G]$. Since $G$ is $\PP$-generic over $V$ and $\PP$ is $<$$\lambda$-closed in $V$ while $\QQ$ is in $V$, there is no subset of $\QQ$ of size less than $\lambda$ in $V[G] \setminus V$. Hence the antichain $A$ is in $V$ as well. By the genericity of $H$ over $V$, we have that $H \cap A \neq \emptyset$, as desired.
\end{proof}

\begin{lem}\label{thm:ADR-countable-sequences}
Assume $\ZF + \AD^+ + \AD_{\RR}$. 
Then for any set $C$ of reals, there is an $s \in \Theta^{\omega}$ such that $C$ is OD from $s$ and that $C$ is in $\HOD_{\{ s\}} (\RR)$.
\end{lem}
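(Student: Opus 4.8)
The plan is to first show that $C$ lies in a proper inner model of the form $\LL(T,\RR)$ for a \emph{set} $T$ of ordinals — which is available under $\AD_{\RR}$ because $C$ then sits at a Wadge rank strictly below a ``successor level'' of the Solovay sequence — and then to compress the parameter $T$ all the way down to a countable sequence of ordinals below $\Theta$ by repeated use of the Coding Lemma. Once $C$ is shown to be $\OD$ from some $s\in\Theta^\omega$ together with an ordinal, the second clause comes for free: every element of the transitive closure of $\{C\}$ is then $\OD$ from $s$ and finitely many reals, so $C\in\HOD_{\{s\}}(\RR)$.

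For the first part, recall that $\AD_{\RR}$ implies that the Solovay sequence $\langle\theta_\alpha : \alpha\le\Omega\rangle$ has $\Omega$ of uncountable cofinality, so $\Omega$ is a limit and $\Theta=\sup_{\alpha<\Omega}\theta_\alpha$, and recall that $\theta_{\beta+1}$ is the supremum of the ordinals which are surjective images of $\RR$ via a map $\OD$ from a real and from a fixed set of reals of Wadge rank $\theta_\beta$. Let $\gamma$ be the Wadge rank of $C$. If $\gamma<\theta_0$, there is an $\OD$ set of reals $U$ of Wadge rank $\ge\gamma$ (since $\theta_0$ is the supremum of the Wadge ranks of $\OD$ sets of reals), so by Wadge's Lemma $C$ or its complement reduces continuously to $U$ and hence $C$ is $\OD$ from a real; as every real is an element of $\Theta^\omega$, this case is finished. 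Otherwise let $\beta$ be least with $\gamma\le\theta_\beta$; then $\beta<\Omega$, so $\theta_\beta<\Theta$. Pick a set of reals $A$ of Wadge rank $\theta_\beta$ with $\wp(\RR)\cap\LL(A,\RR)$ equal to the collection of all sets of Wadge rank $<\theta_{\beta+1}$ (such $A$ exists by the standard theory of the Wadge hierarchy under $\AD$). Then $N:=\LL(A,\RR)$ satisfies $\ZF+\AD^++\lq\lq V=\LL(\wp(\RR))"$ with $\Theta^N=\theta_{\beta+1}$; since $\beta+1$ is a successor, $\AD_{\RR}$ fails in $N$; and since the Wadge rank of $C$ is $\le\theta_\beta=$ the Wadge rank of $A$, Wadge's Lemma gives $C\in N$.

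Next I would apply Theorem~\ref{thm:AD+ADRfail} inside $N$ to obtain a set $T$ of ordinals with $N=\LL(T,\RR)$; by inspection of that proof together with Theorem~\ref{thm:V=LTR} applied in $N$, one may take $T$ to be a bounded subset of $\Theta^N=\theta_{\beta+1}<\Theta$. Since every set of reals of $N$ is $\OD^N$ from $T$ and a real, and $N$ is definable over $V$ from $A$ (equivalently from $T$), it follows that $C$ is $\OD$ (in $V$) from $T$ and a real. I now peel $T$ off using the Coding Lemma: there is a prewellordering $P$ of $\RR$ of length $\sup(T)+1$ and an $\OD$-from-$P$ surjection $\rho\colon\RR\to\wp(\sup(T)+1)$, so $T=\rho(r)$ for some real $r$ and therefore $C$ is $\OD$ from $P$ and a real. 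Iterating — at each round replacing the current set-of-reals parameter by a Coding-Lemma prewellordering, and folding all reals produced and all ordinals named so far into a single element of $\Theta^\omega$ — one arrives at $C$ being $\OD$ from some $s\in\Theta^\omega$ and an ordinal.

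The hard part is showing that this last iteration terminates. A Coding-Lemma prewellordering need not have smaller Wadge rank than the set one started from, so a naive induction on Wadge rank stalls; indeed, if the Wadge rank of $C$ is \emph{exactly} some $\theta_\beta$, then $C$ need not be $\OD$ from any set of reals of smaller Wadge rank together with a real. The induction must therefore be run along $\beta<\Omega$: successor stages are handled as above via $\LL(A,\RR)$ and Theorems~\ref{thm:AD+ADRfail} and~\ref{thm:V=LTR}, limit stages of countable cofinality via cofinal $\omega$-sequences (whose ranges are only countable, so still fit into one element of $\Theta^\omega$), and limit stages of uncountable cofinality by reducing to the preceding analysis carried out inside the appropriate inner model. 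Equivalently, one can bypass this bookkeeping by appealing to Woodin's structure theory for models of $\AD_{\RR}$: the relevant $\HOD$'s there are fine-structural, and in a fine-structural model every bounded subset of $\Theta$ is captured by a countable sequence of ordinals, which directly supplies the required $s$.
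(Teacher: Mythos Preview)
Your approach diverges sharply from the paper's, and the gap you yourself flag --- termination of the ``compress the parameter'' iteration --- is genuine and is not closed by the sketches you offer. At the successor step you place $C$ (of Wadge rank $\le\theta_\beta$) inside $N=\LL(T,\RR)$ with $\Theta^N=\theta_{\beta+1}$, and extract $T$ bounded in $\theta_{\beta+1}$. But $T$ must be rich enough to generate $A$ (a set of Wadge rank $\theta_\beta$), so there is no reason for $\sup(T)<\theta_\beta$; the Coding-Lemma prewellordering $P$ you then invoke to capture $T$ will typically have Wadge rank $\ge\theta_\beta$, and you have not descended in the Solovay sequence at all. The successor step thus reproduces exactly the difficulty it was meant to resolve. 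The final appeal to fine-structural $\HOD$'s is not an argument but a promissory note, and in any case imports machinery far heavier than the lemma warrants.

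The paper's proof sidesteps the entire descent problem by using a different structural consequence of $\AD^+ + \AD_{\RR}$: every set of reals is Suslin, hence (Martin--Steel) homogeneously Suslin. So $2^\omega\setminus C$ carries a homogeneity system $(\mu_u \mid u\in 2^{<\omega})$ of measures on some $\kappa^{<\omega}$ with $\kappa<\Theta$. The crucial ingredient you are missing is Kunen's theorem that under $\AD$ every countably complete measure on an ordinal below $\Theta$ is $\OD$; combined with the Coding Lemma (and $\AD_\RR$), each $\mu_u$ is then definable from a \emph{single ordinal} below $\Theta$. Since the system is countable, one $s\in\Theta^\omega$ defines the whole sequence $(\mu_u)$, and the Martin--Solovay tree built from it is $\OD$ from $s$ and projects to $C$. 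This delivers both clauses in one stroke, with no iteration and no induction along the Solovay sequence.
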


\begin{proof}
Let $C$ be any set of reals. By the result of Woodin~\cite{ADplus}, under $\ZF + \AD^+ + \AD_{\RR}$, every set of reals is Suslin. By the result of Martin and Steel~\cite{MR2463620}, every Suslin and co-Suslin set of reals is homogeneously Suslin. In particular, the complement $2^{\omega} \setminus C$ is homogeneously Suslin witnessed by the sequence $(\mu_u \mid u \in 2^{<\omega} )$ of measures on $\kappa^{<\omega}$ for some $\kappa < \Theta$. By the result of Kunen~\cite[28.21~Corollary]{MR1994835}, each measure $\mu_u$ is OD. Using the Moschovakis Coding Lemma and $\AD_{\RR}$, one can show that each measure $\mu_u$ is definable from an ordinal below $\Theta$. Hence there is an $s \in \Theta^{\omega}$ such that the sequence $(\mu_u \mid u \in 2^{<\omega} )$ is definable from $s$. 
Now from the sequence $(\mu_u \mid u \in 2^{<\omega} )$, one can construct a Martin-Solovay tree $T$ such that $C = \text{p} [T]$. By the construction of $T$, it follows that $T$ is OD from $(\mu_u \mid u \in 2^{<\omega} )$. Hence $T$ is OD from $s$, which easily implies that the set $C$ is OD from $s$ and $C$ is in $\HOD_{\{ s\}} (\RR)$, as desired.
\end{proof}

\begin{lem}\label{lem:ADR-countable-sequences}
Assume $\ZF + \AD_{\RR} $. 
Let $\gamma < \Theta$ and $\QQ_1$ be the Vop\v{e}nka algebra for adding an element of $\gamma^{\omega}$ in $\HOD$. 
Also let $\QQ_{\omega}$ be the finite support limit of the Vop\v{e}nka algebras for adding an element of $\gamma^{\omega}$ in $\HOD$. 
\begin{enumerate}
\item The posets $\QQ_1$ and $\QQ_{\omega}$ are of size less than $\Theta$ in $\HOD$.

\item Let $s \in \gamma^{\omega}$ and $h_s = \{ p \in \QQ_1 \mid s \in \pi_1 (p) \}$, where $\pi_1 \colon \QQ_1 \to \mathcal{O}_1$ is as in Definition~\ref{def:Vop}. Then the set $h_s$ is a $\QQ_1$-generic filter over $\HOD$ such that $\HOD [h_s] = \HOD_{ \{ s \}}$.

\item (Woodin) 
There is a $\QQ_{\omega}$-generic filter $H$ over $\HOD$ such that the set $(\gamma^{\omega})^V$ is countable in $\HOD [H]$. 
\end{enumerate}
\end{lem}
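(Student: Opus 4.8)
The plan is to prove the three parts separately: parts~(1) and~(2) by adapting standard Vop\v{e}nka-algebra arguments, and part~(3) --- which is Woodin's --- by following~\cite[Lemma~3.4 and Lemma~3.5]{MR2463615}. \emph{Part~(1).} I would bound $|\mathcal{O}_1|$, where $\mathcal{O}_1$ is the family of nonempty OD subsets of $\gamma^{\omega}$ from Definition~\ref{def:Vop}. Since $\gamma<\Theta$, fix a prewellordering $B$ of $\RR$ of length $\gamma$; as in the proof of Theorem~\ref{thm:CodingLemma} it induces a surjection $\RR\to\gamma$, hence a surjection $q\colon\RR\to\gamma^{\omega}$, which is OD from $B$. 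Then $A\mapsto q^{-1}(A)$ injects $\mathcal{O}_1$ into the family of subsets of $\RR$ that are OD from $B$. By Solovay's theorem (Theorem~\ref{thm:Solovay}) there is a set $C_0$ of reals that is not OD from $B$ and any real; hence the Wadge ranks of the sets OD from $B$ cannot be cofinal in $\Theta$, since otherwise $C_0$ would Wadge-reduce to such a set (Wadge's Lemma) via a continuous function coded by a real, making $C_0$ OD from $B$ and a real. So these Wadge ranks are bounded by some $\beta_B<\Theta$; and the collection of sets of reals of Wadge rank below $\beta_B$ is a surjective image of $\RR$ (Theorem~\ref{thm:CodingLemma}), so it has size $<\Theta$ --- a $\Theta$-sequence of distinct such sets would yield a surjection $\RR\to\Theta$. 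Thus $|\mathcal{O}_1|<\Theta$, so $\QQ_1$ is a poset on an ordinal $<\Theta$ in $\HOD$. The same argument, pulling back along a surjection of $\RR$ onto $(\gamma^{\omega})^{n}$ that is OD from the \emph{same} $B$, bounds $|\QQ_n|$ below $\Theta$ uniformly in $n$; since $\QQ_{\omega}$ is the direct limit along injective maps, $|\QQ_{\omega}|=\sup_n|\QQ_n|<\Theta$.

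\emph{Part~(2).} First, $h_s$ is a filter: it is upward closed because $p\le q$ means $\pi_1(p)\subseteq\pi_1(q)$, and downward directed because if $s\in\pi_1(p)\cap\pi_1(q)$ then this intersection is a nonempty OD subset of $\gamma^{\omega}$, hence $\pi_1(r)$ for some $r\le p,q$ with $s\in\pi_1(r)$; note also that $p,q$ are incompatible exactly when $\pi_1(p)\cap\pi_1(q)=\emptyset$. For genericity over $\HOD$: if $A$ is a maximal antichain in $\QQ_1$ with $A\in\HOD$, then $A$ is OD, so $U:=\gamma^{\omega}\setminus\bigcup_{p\in A}\pi_1(p)$ is OD, and were $U\neq\emptyset$ it would equal $\pi_1(r)$ for an $r$ incompatible with every element of $A$, contradicting maximality; hence $\{\pi_1(p):p\in A\}$ covers $\gamma^{\omega}$, so $s\in\pi_1(p)$ for some $p\in A$ and $h_s\cap A\neq\emptyset$. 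For the equality of models: $h_s$ is a set of ordinals OD from $s$, so $h_s\in\HOD_{\{s\}}$ and $\HOD[h_s]\subseteq\HOD_{\{s\}}$; conversely, $s$ itself is recovered in $\HOD[h_s]$ from $h_s$ and the OD maximal antichains of conditions $r$ with $\pi_1(r)=\{\,t:t(k)=\zeta\,\}$ (fixing $k$, varying $\zeta<\gamma$), and for a set $D$ of ordinals OD from $s$, say $D=\{\xi:\varphi(\xi,s,\alpha)\}$, choosing $r_{\xi}\in\QQ_1$ with $\pi_1(r_{\xi})=\{\,t\in\gamma^{\omega}:\varphi(\xi,t,\alpha)\,\}$ when nonempty gives an OD function $\xi\mapsto r_{\xi}$ in $\HOD$ with $\xi\in D\iff s\in\pi_1(r_{\xi})\iff r_{\xi}\in h_s$, so $D\in\HOD[h_s]$; as every element of $\HOD_{\{s\}}$ is coded there by a set of ordinals, $\HOD_{\{s\}}\subseteq\HOD[h_s]$.

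\emph{Part~(3).} I would follow~\cite[Lemma~3.4 and Lemma~3.5]{MR2463615}, reading $\gamma^{\omega}$ for $2^{\omega}$ and $\HOD$ for $\HOD_{\{T\}}$. First pass to a forcing extension of $V$ in which $\RR^{V}$ is countable; then $(\gamma^{\omega})^{V}$, being a surjective image of $\RR^{V}$, and $\QQ_{\omega}$, which by part~(1) has size $<\Theta$ hence is a surjective image of $\RR^{V}$, are countable there. In this extension, build a filter $H\subseteq\QQ_{\omega}$ by an $\omega$-step recursion which (i)~meets every dense subset of $\QQ_{\omega}$ in $\HOD$ --- now only countably many --- and (ii)~arranges, using the finite-support direct-limit structure of $\QQ_{\omega}$ and the coordinatewise analogue of part~(2) for the $\QQ_n$, that the $\omega$-sequence $\langle s_n:n<\omega\rangle$ of elements of $\gamma^{\omega}$ coded by $H$ enumerates $(\gamma^{\omega})^{V}$. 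As in part~(2) this sequence is computed in $\HOD[H]$ from $H$ and OD data, so $\HOD[H]$ has a surjection $\omega\to(\gamma^{\omega})^{V}$; thus $(\gamma^{\omega})^{V}$ is countable in $\HOD[H]$, while $H$ is $\QQ_{\omega}$-generic over $\HOD$ by~(i).

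The only genuinely delicate point is the recursion in part~(3): each step must extend the current condition both to meet the next dense set and, when the bookkeeping requires, to install a prescribed --- possibly non-OD --- element of $(\gamma^{\omega})^{V}$ as a fresh coordinate of the generic sequence, and one must verify these demands never clash. This is exactly the content of the cited lemmas of~\cite{MR2463615}, which I would invoke rather than reprove; parts~(1) and~(2) are, respectively, a counting argument driven by Solovay's theorem and the Coding Lemma, and the routine Vop\v{e}nka-genericity computation.
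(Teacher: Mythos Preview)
Your proposal is correct and follows essentially the same approach as the paper. For~(1), the paper argues slightly more directly: rather than bounding the Wadge ranks of sets OD from the parameter and then invoking a separate fact about sets of bounded Wadge rank, it observes (via Wadge's Lemma) that each $\sigma^{-1}(b)$ is Wadge reducible to the Solovay set itself, which immediately gives a surjection from $\RR$ onto $\mathcal{O}_1$; your citation of Theorem~\ref{thm:CodingLemma} for ``sets of Wadge rank below $\beta_B$ form a surjective image of $\RR$'' is misplaced (that theorem concerns $\wp(\gamma)$), though the fact follows from Wadge's Lemma in exactly the way the paper uses it. For~(2) you prove genericity directly where the paper simply cites~\cite[Theorem~15.46]{Jech}, and your computation of $\HOD[h_s]=\HOD_{\{s\}}$ matches the paper's; for~(3) both you and the paper defer to the same lemmas in~\cite{MR2463615} with the same substitutions.
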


\begin{proof}
For (1), we first show that the poset $\QQ_1$ is of size less than $\Theta$ in $\HOD$. Recall from Definition~\ref{def:Vop} that $\pi_1 \colon \QQ_1 \to \mathcal{O}_1$ is a surjection which is OD, where $\mathcal{O}_1$ is the collection of all subsets of $\gamma^{\omega}$ which are OD. Since $\gamma < \Theta$, by Theorem~\ref{thm:CodingLemma}, there is a set $A$ of reals such that there is a surjection from $\RR$ to $\wp (\gamma)$ which is OD from $A$. In particular, there is a surjection $\sigma \colon \RR \to \gamma^{\omega}$ which is OD from $A$. 
Hence for each $b \in \mathcal{O}_1$, the set $\sigma^{-1} (b)$ of reals is OD from $A$. Since we assume $\AD_{\RR}$, by Theorem~\ref{thm:Solovay}, there is a set $B$ of reals which is not OD from $A$. By Wadge's Lemma under $\ZF + \AD$, for each $b \in \mathcal{O}_1$, the set $\sigma^{-1} (b)$ is Wadge reducible to $B$. In particular, there is a surjection from $\RR$ to the family $\{ \sigma^{-1} (b) \mid b \in \mathcal{O}_1 \}$. Hence the family $\mathcal{O}_1$ is also a surjective image of $\RR$ and the poset $\QQ_1$ is of size less than $\Theta$ in $V$. Since $\Theta$ is a cardinal in $V$, it follows that the poset $\QQ_1$ is of size less than $\Theta$ in $\HOD$ as well. 

We next show that the poset $\QQ_{\omega}$ is of size less than $\Theta$ in $\HOD$. Let $C = A \oplus B = \{ x \ast y \mid x \in A \text{ and } y \in B \}$, where $x \ast y (2\ell) = x(\ell)$ and $x \ast y (2\ell+1) = y (\ell)$ for all $\ell < \omega$. Then the argument in the last paragraph shows that there is a surjection from $\RR$ to $\QQ_1$ which is OD from $C$. Similarly, one can argue that for each natural numbers $n$ with $n \ge 1$, there is a surjection from $\RR$ to $\QQ_n$ which is OD from $C$. Since all such surjections are OD from $C$, one can pick a sequence $(\rho_n \colon \RR \to \mathcal{O}_n \mid n \ge 1)$ of surjections, which would readily give us a surjection from $\RR$ to $\QQ_{\omega}$. Therefore, the poset $\QQ_{\omega}$ is of size less than $\Theta$ in $V$. Since $\Theta$ is a cardinal in $V$, it follows that the poset $\QQ_{\omega}$ is of size less than $\Theta$ in $\HOD$ as well. 

For (2), for the $\QQ_1$-genericity of $h_s$ over $\HOD$, see e.g., \cite[Theorem~15.46]{Jech}. 

We will show the equality $\HOD[h_s] = \HOD_{\{s\}}$. The inclusion $\HOD [h_s] \subseteq \HOD_{\{ s \}}$ is easy because $h_s$ is OD from $s$ and $h_s$ is a set of ordinals. We will argue that $\HOD_{\{s \}} \subseteq \HOD [h_s]$. Since $\HOD_{\{ s \}}$ is a model of $\ZFC$, it is enough to see that every set of ordinals in $\HOD_{\{ s \} }$ is also in $\HOD [h_s]$. Let $X$ be any set of ordinals in $\HOD_{\{s \}}$. We will verify that $X$ is also in $\HOD[h_s]$. Let $\delta$ be an ordinal such that $X \subseteq \delta$.
Since $X$ is in $\HOD_{\{s \} }$, the set $X$ is OD from $s$. So there is a formula $\phi$ such that for all $\alpha < \delta$, we have that $\alpha \in X$ if and only if $\phi [\alpha , s]$ holds. 
For each $\alpha < \delta$, let $b_{\alpha} = \{ x \in \gamma^{\omega} \mid \phi [\alpha , x]\}$. Then each set $b_{\alpha}$ is a subset of $\gamma^{\omega}$ which is OD. So each $b_{\alpha}$ is in $\mathcal{O}_1$. Now we have the following equivalences: For all $\alpha < \delta$,
\begin{align*}
\alpha \in X \iff \phi [\alpha , s] \iff s \in b_{\alpha} \iff \pi_1^{-1} (b_{\alpha}) \in h_s.
\end{align*}
Hence $X = \{ \alpha < \delta \mid \pi_1^{-1} (b_{\alpha}) \in h_s \}$. Since the sequence $\bigl(\pi_1^{-1} (b_{\alpha} ) \in \QQ_1 \mid \alpha < \delta \bigr)$ is OD and $\QQ_1$ is in $\HOD$, the sequence $\bigl(\pi_1^{-1} (b_{\alpha} ) \in \QQ_1 \mid \alpha < \delta \bigr)$ belongs to $\HOD$. Hence the set $ \{ \alpha < \delta \mid \pi_1^{-1} (b_{\alpha}) \in h_s \}$ is in $\HOD [h_s]$. Therefore, the set $X$ is in $\HOD [h_s]$, as desired.

For (3), one can argue in the same way as in \cite[Lemma~3.4 and Lemma~3.5]{MR2463615} by replacing $\RR$ with $\gamma^{\omega}$, $\mathcal{M}$ with $V$, and $\mathcal{H}$ with $\HOD$.
\end{proof}


\section{On forcings increasing $\Theta$}\label{sec:increasing-Theta}

In this section, we prove the following theorems:
\begin{thm}\label{thm:increasing-Theta-destruction}
Assume $\ZF+\AD^+ + \lq\lq V = \LL \bigl(\wp (\mathbb{R})\bigr)"$. Suppose that a poset $\mathbb{P}$ increases $\Theta$, i.e., $\Theta^{V} < \Theta^{V[G]}$ for any $\mathbb{P}$-generic filter $G$ over $V$. Then $\AD$ fails in $V [G]$ for any $\mathbb{P}$-generic filter $G$ over $V$.
\end{thm}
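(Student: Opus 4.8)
The plan is to argue by contradiction. Fix a $\PP$-generic filter $G$ over $V$, assume $\AD$ holds in $V[G]$, and put $\delta := \Theta^{V}$, so $\delta < \Theta^{V[G]}$ by hypothesis. First I would use the $\HOD$-analysis available in $V$: by Theorem~\ref{thm:HODLA}, $\HOD^{V} = \LL[Z]$ for some $Z \subseteq \delta$, and $\HOD^{V} \subseteq V \subseteq \HOD^{V}[H]$ for some $\QQ \in \HOD^{V}$ and some $\QQ$-generic filter $H$ over $\HOD^{V}$. To control the size of the capturing poset I would split according to Theorem~\ref{thm:AD+ADRfail}. If $\AD_{\RR}$ fails, then $V = \LL(T,\RR)$ for a set $T$ of ordinals, $\delta$ is regular, $\HOD^{V}_{\{T\}} = \LL[T,Z]$ by Theorem~\ref{thm:V=LTR}, and --- replacing $\QQ$ by the finite support limit $\QQ_{\omega}$ of the Vop\v{e}nka algebras of Definition~\ref{def:Vop} and appealing to Lemma~\ref{lem:Qomega} --- the capturing poset can be taken of size at most $\delta$ in $\HOD^{V}_{\{T\}}$, with $\RR^{V}$ countable in $\HOD^{V}_{\{T\}}[H]$. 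If $\AD_{\RR}$ holds, the analogous facts come from Lemma~\ref{thm:ADR-countable-sequences} and Lemma~\ref{lem:ADR-countable-sequences}. Either way one obtains a transitive class model $N \models \ZFC$ with $V \subseteq N$ and $\Ord^{N} = \Ord$ which is thin over $\delta$ in the strong sense that $N \models \lq\lq \delta = \omega_{1}"$ and $N$ carries a wellordering of $\wp(\RR)^{V}$ of some length $\lambda < (\delta^{++})^{N}$, obtained by counting the nice names for subsets of $\delta$ inside $\LL[Z]$ (resp.\ inside $\LL[T,Z]$).

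The second step plays $N$ against $V[G]$. Since $\AD$ holds in $V[G]$, Theorem~\ref{thm:Moschovakis} makes $\Theta^{V[G]}$ a limit of measurable cardinals of $V[G]$, so, $\delta$ being below $\Theta^{V[G]}$, the ordinal $\lambda$ lies well below $\Theta^{V[G]}$ as well. By Theorem~\ref{thm:CodingLemma} applied in $V[G]$, for every non-zero $\gamma < \Theta^{V[G]}$ the set $\wp(\gamma)^{V[G]}$ is a surjective image of $\RR^{V[G]}$; taking $\gamma = \delta$ and restricting to $\wp(\delta)^{V}$, there is a surjection $g \colon \RR^{V[G]} \to \wp(\delta)^{V}$ in $V[G]$. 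Now $V[G]$ can recover $N$: since $Z \in \wp(\delta)^{V}$ (and, where needed, a code for $T$ lies in $\wp(\gamma)^{V}$ for a slightly larger $\gamma < \Theta^{V[G]}$), the model $\LL[Z]$ --- hence $\QQ$ and the nice names building $N$ from it --- is definable in $V[G]$ from $g$ and a single real $z_{0}$ with $g(z_{0}) = Z$. Pushing those names through $g$ reconstructs $\wp(\RR)^{V}$ inside $V[G]$, exhibiting it, and hence its subset $\RR^{V}$, as a surjective image of $\RR^{V[G]}$, i.e.\ as a wellorderable set of reals of $V[G]$. By $\AD$ and Wadge's Lemma in $V[G]$, which rule out an injection of $\omega_{1}$ into the reals, $\RR^{V}$ is then countable in $V[G]$.

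It remains to contradict \lq\lq $\RR^{V}$ is countable in $V[G]$". If $\PP$ adds no new real this is immediate, since then $\RR^{V} = \RR^{V[G]}$ is uncountable in $V[G]$. If $\PP$ does add a real, then in an intermediate model $V \subseteq V[c] \subseteq V[G]$ already $\RR^{V}$ is countable; by Moschovakis' Coding Lemma in $V$ every $\gamma < \delta$ is then countable in $V[c]$, so $\delta \le \omega_{1}^{V[c]}$, i.e.\ $V[c]$ collapses every $V$-cardinal below $\Theta^{V}$; since such a collapse absorbs $\Col (\omega, \omega_{1}^{V})$, a non-trivial well\-orderable forcing of $V$-cardinality below $\Theta^{V}$, the result of Chan and Jackson~\cite[Theorem~3.2]{MR4242147} gives that $\AD$ already fails in an intermediate extension contained in $V[c]$ --- and one checks this failure persists to $V[G]$ --- contradicting $\AD$ in $V[G]$. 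I expect the decisive difficulty to be the reconstruction step of the second paragraph: pinning down the capturing poset of Theorem~\ref{thm:HODLA} tightly enough in \emph{both} branches of Theorem~\ref{thm:AD+ADRfail}, and verifying that the recomputation of $\wp(\RR)^{V}$ in $V[G]$ uses only ground-model-definable data together with the Coding-Lemma surjection $g$, and never the generic $H$, which in general does not lie in $V[G]$.
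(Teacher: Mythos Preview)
The paper's argument is much shorter and rests on an idea your proposal does not use. From Theorem~\ref{thm:HODLA} one extracts only that $\HOD^{V} = \LL[Z]$ for some $Z \subseteq \Theta^{V}$ and that $V \subseteq \HOD^{V}[H]$ for some set-generic $H$ over $\HOD^{V}$. Since $Z^{\#}$ does not exist in $\LL[Z]$ and forcing cannot create sharps, $Z^{\#}$ does not exist in $V$, hence not in $V[G]$ either. But if $\AD$ held in $V[G]$ then by Theorem~\ref{thm:Moschovakis} there is a measurable cardinal $\kappa > \Theta^{V}$ in $V[G]$; inside $\LL[U,Z]$, for $U$ a witnessing measure on $\kappa$, one computes $Z^{\#}$, which by absoluteness then exists in $V[G]$ --- contradiction. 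There is no case split on $\AD_{\RR}$, no reconstruction of auxiliary models inside $V[G]$, and no appeal to Chan--Jackson.

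Your route, by contrast, has genuine gaps beyond the one you flag. First, the wellorder of $\wp(\RR)^{V}$ that you locate in $N$ need not lie in $V[G]$, since the Vop\v{e}nka generic $H$ has no reason to belong to $V[G]$; your reconstruction sketch does not explain how $V[G]$ can evaluate the relevant names without access to $H$, and the Coding-Lemma surjection $g$ onto $\wp(\delta)^{V}$ hands you $Z$ and the names but not their $H$-evaluations. Second, even granting that $\RR^{V}$ is countable in $V[G]$ and that some intermediate $V[c]$ absorbs a $\Col(\omega,\omega_1^{V})$-generic $h$ over $V$, Chan--Jackson yields only that $\AD$ fails in $V[h]$; since $V[h]$ and $V[G]$ need not have the same reals, the failure of $\AD$ in $V[h]$ does not propagate upward to $V[G]$, and the argument does not close.
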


\begin{thm}\label{thm:increasing-Theta-example}
It is consistent relative to $\mathsf{ZF}+\mathsf{AD}_{\mathbb{R}}$ that $\mathsf{ZF}+\mathsf{AD}$ holds and there is a poset $\mathbb{P}$ increasing $\Theta$ while preserving $\mathsf{AD}$, i.e., for any $\mathbb{P}$-generic filter $G$ over $V$, we have $\Theta^{V} < \Theta^{V[G]}$ and that $\AD$ holds in $V[G]$.
\end{thm}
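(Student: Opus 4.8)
The plan is to build the required model as an inner model of a model of $\ZF+\AD^+$ in which $\Theta>\Theta_0$. Such a model exists relative to $\ZF+\AD_{\RR}$: by Theorem~\ref{thm:Solovay}, under $\AD_{\RR}$ there is a set of reals not $\OD$ from any real, so $\Theta_0<\Theta$ there. Fix, then, $N\models\ZF+\AD^+$ with $\Theta_0^N<\Theta^N$, and work inside $N$ for the rest of the argument.

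First I would set $V:=\LL(\RR)^N$. Since $\RR^V=\RR^N$ and winning strategies for integer games are coded by reals, $\AD$ passes down from $N$ to $V$, so $V\models\ZF+\AD$. Moreover every set of reals in $\LL(\RR)$ is $\OD$ from a real, so $\Theta^V=\Theta_0^V\le\Theta_0^N<\Theta^N$; in particular $\Theta^V<\Theta^N$ and $\wp(\RR)^V\subsetneq\wp(\RR)^N$. It now suffices to produce a poset $\PP\in V$ such that $\PP$ forces $\AD$ and forces $\Theta$ to be strictly larger than $\Theta^V$; such a $\PP$, together with $V\models\ZF+\AD$, witnesses the theorem.

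The poset $\PP$ will be a Vop\v{e}nka-style algebra in $V$ (ideally realized, as in Definition~\ref{def:Vop}, as a poset on an ordinal via an $\OD$-in-$V$ coding) designed so that forcing with $\PP$ over $V$ adjoins a surjection $f\colon\RR\to\Theta^V$ --- equivalently, a prewellordering $\preceq_f$ of $\RR$ of length $\Theta^V$. Such an $f$ exists in $N$ because $\Theta^V<\Theta^N$, but none exists in $V$, and adjoining it forces $\Theta$ to grow: since $\wp(\RR)^V$ is downward closed under $\le_{\text{W}}$, Wadge's Lemma forces any set of reals lying outside $V$ --- in particular $\preceq_f$ --- to have Wadge rank at least $\Theta^V$, so $\Theta$ is strictly larger in the extension. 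I would then verify the two technical points: (i) $\PP$ adds no new real over $V$; and (ii) a suitably chosen surjection $f\in N$ induces a filter $G$ on $\PP$ that is $\PP$-generic over $V$. Granting (i) and (ii): $V[G]\subseteq N$ (as $\PP,G\in N$ and $V\subseteq N$), and $\RR^{V[G]}=\RR^V=\RR^N$ by (i); hence every set of reals of $V[G]$ lies in $\wp(\RR)^N$ and is therefore determined in $N$, its winning strategy being a real and so lying in $V[G]$; thus $V[G]\models\AD$. And $V[G]$ contains $f$, hence a prewellordering of $\RR$ of length $\Theta^V$, a set of reals absent from $V$, so $\Theta^{V[G]}>\Theta^V$. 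Finally, since Vop\v{e}nka-style algebras are weakly homogeneous, the existence of one generic $G$ with these properties upgrades to: every $\PP$-generic extension of $V$ satisfies $\AD$ and has $\Theta>\Theta^V$, as required. By (i), $\PP$ adds a new set of reals but no new real, in line with the remark following the theorem.

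I expect the crux to be the construction of $\PP$ together with point (ii). The first difficulty is that the object to be adjoined --- a surjection $\RR\to\Theta^V$, equivalently a set of reals of Wadge rank at least $\Theta^V$ --- has no approximations in $V$ at all (no surjection of $\RR$ onto any ordinal $\ge\Theta^V$ exists there), so the literal Vop\v{e}nka algebra for it is trivial; one must instead define $\PP$ more inventively, as an $\OD$-in-$V$ ``Vop\v{e}nka collapse'' built from the rich structure of prewellorderings of $\RR$ of length $<\Theta^V$ that $V$ does possess, using the $\AD^+$ structure theory available in $V$ --- Wadge's Lemma, the Moschovakis Coding Lemma (Theorem~\ref{thm:CodingLemma}), the regularity of $\Theta$ in $\LL(\RR)^N$, and the representation $\HOD^V\subseteq V\subseteq\HOD^V[H]$ of Theorem~\ref{thm:HODLA}. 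The second difficulty, point (ii), is then a genericity computation carried out inside $N$: one must show that a carefully chosen surjection in $N$ meets every dense subset of $\PP$ lying in $V$. (An expert could instead route this through the derived model theorem, realizing $V$ as an ``old'' derived model $\LL(\RR^*)$ and exhibiting the corresponding ``new'' derived model $\LL(\mathrm{Hom}^*,\RR^*)$ as a $\PP$-generic extension of $V$.) The remaining items --- that $\PP$ adds no real over $V$, and that $\PP$ is weakly homogeneous --- should be routine features of the Vop\v{e}nka-style construction.
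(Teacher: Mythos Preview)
Your proposal leaves the central construction undone: you correctly observe that the naive Vop\v{e}nka algebra for a surjection $\RR\to\Theta^V$ is trivial in $V$, and then defer the actual definition of $\PP$ to an unspecified ``more inventive'' construction built from prewellorderings of bounded length. Since producing this poset is the entire content of the theorem, the proposal as written is a strategy outline rather than a proof.

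The paper sidesteps the difficulty by two changes. First, the inner model is $M=\HOD_{\RR}$ rather than $\LL(\RR)$. Second, and more to the point, the object adjoined is not a surjection onto $\Theta^M$ but simply a generic element of $\wp(\RR)$: let $\mathcal{O}$ be the collection of nonempty subsets of $\wp(\RR)$ that are $\OD$ from some real (computed in the ambient $\AD_{\RR}$-model), ordered by inclusion, and let $\PP$ be an $\OD$ copy of $\mathcal{O}$ carried on $\eta\times\RR$ via an $\OD$ surjection $\pi$. This poset lies in $\HOD_{\RR}$ precisely because its order relation is $\OD$. Now \emph{any} $B\subseteq\RR$ yields the filter $G=\pi^{-1}\{b\in\mathcal{O}:B\in b\}$, and the standard Vop\v{e}nka density argument shows $G$ is $\PP$-generic over $M$; choosing $B\notin M$ (Theorem~\ref{thm:Solovay} supplies one), a direct computation recovers $B$ from $G$ and the sequence $(\alpha_x)_{x\in\RR}$ in $M$, so $B\in M[G]\setminus M$ and hence $\Theta^{M[G]}>\Theta^M$ by Wadge, as you noted. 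No homogeneity is invoked: once a single generic has the desired properties, one restricts $\PP$ below a condition forcing them. Your choice of $\LL(\RR)$ would obstruct this particular argument, since the order on $\PP$ encodes inclusions among $\OD$-from-a-real subsets of the full $\wp(\RR)$ of the ambient model---data not in general available inside $\LL(\RR)$; working in $\HOD_{\RR}$ is exactly what makes the construction go through without the further invention you anticipated needing.
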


\begin{proof}[Proof of Theorem~\ref{thm:increasing-Theta-destruction}]

Let $G$ be a $\PP$-generic filter over $V$. We will show that $\AD$ fails in $V[G]$.
Towards a contradiction, we assume that $\AD$ holds in $V[G]$.

Since we have $\AD^+$ and $V = \LL \bigl (\wp (\RR) \bigr)$, by Theorem~\ref{thm:HODLA}, the model HOD is of the form $\LL [Z]$ for some subset $Z$ of $\Theta$, and there are a poset $\QQ$ in HOD and a $\QQ$-generic filter $H$ over HOD such that $\HOD \subseteq V \subseteq \HOD[H]$. In particular, $Z^{\#}$ does not exist in HOD. Since any poset does not add $Z^{\#}$, it follows that $Z^{\#}$ does not exist in $V$ either.

We will argue that $Z^{\#}$ exists in $V[G]$, which would contradict the fact that $Z^{\#}$ does not exist in $V$.
Since $\PP$ increases $\Theta$, we have $\Theta^V < \Theta^{V[G]}$. 
By assumption, we have $\AD$ in $V[G]$, so by Theorem~\ref{thm:Moschovakis}, it follows that $\Theta^{V[G]}$ is a limit of measurable cardinals in $V[G]$. In particular, there is a measurable cardinal $\kappa$ in $V[G]$ such that $\Theta^V < \kappa$. 
Let $U$ be a $<$$\kappa$-complete nonprincipal ultrafilter on $\kappa$ in $V[G]$. 
Then letting $M = \LL [U, Z]$, the cardinal $\kappa$ is measurable also in $M$ witnessed by $U \cap M$. 
Since $M$ is a model of $\ZFC$ and $Z$ is a bounded subset of $\kappa$ in $M$, it follows that $Z^{\#}$ exists in $M$.
By absolutness of $Z^{\#}$, we have $Z^{\#}$ in $V[G]$, contradicting the fact that $Z^{\#}$ does not exist in $V$.

Therefore, the assumption that $\AD$ holds in $V[G]$ was wrong, and $\AD$ fails in $V[G]$.

This completes the proof of Theorem~\ref{thm:increasing-Theta-destruction}.
\end{proof}

\begin{proof}[Proof of Theorem~\ref{thm:increasing-Theta-example}]

We assume $\ZF + \AD_{\RR}$ and will show that there is an inner model $M$ of $\ZF + \AD$ satisfying that there is a poset $\PP$ increasing $\Theta$ while preserving $\AD$.


Let $M =\HOD_{\RR}$, the class of all sets hereditarily ordinal definable from some real. We will show that $M$ is the desired inner model.

First notice that $M$ is a model of $\ZF$. Also since $M$ contains all the reals and $V$ satisfies $\AD$, we have that $M$ is a model of $\AD$ as well. 
Since we have $\AD_{\RR}$ in $V$, by Theorem~\ref{thm:Solovay}, there is a set $B$ of reals which is not definable from any ordinal and any real. Hence the set $B$ is not in $M$.

We will show that $M$ satisfies that there is a poset $\PP$ increasing $\Theta$ while preserving $\AD$. The idea is to consider a variant of Vop\v{e}nka algebra in $M$ adding the set $B$ to $M$. 
Let $\mathcal{O} = \{ b \subseteq \wp (\RR) \mid \text{$b$ is nonempty and OD from some real}\}$ ordered by inclusion. Then $\mathcal{O}$ is a poset which is OD.
Let $\eta$ be a sufficiently large ordinal and let $\pi \colon \eta \times \RR \to \mathcal{O}$ be a surjection which is OD such that if a set $b$ is in $\mathcal{O}$ and OD from a real $x$, then there is some $\alpha < \eta$ such that $\pi (\alpha , x) = b$.
Let $\PP = \pi^{-1} (\mathcal{O})$ and for $p_1, p_2\in \PP$, we set $p_1 \le p_2$ if $\pi (p_1) \subseteq \pi (p_2)$. 
Then since $\pi$ is OD, the poset $\PP$ is in $M$. For an $r $ in $\PP$, let $\PP \upharpoonright r = \{ p \in \PP \mid p \le r\}$. 

We will show that there is some $\PP$-generic filter $G$ over $M$ such that $\Theta^M < \Theta^{M[G]}$ and $M[G]$ is a model of $\AD$. This is enough to end the arguments for the theorem because then there is some $r \in \PP$ forcing the desired two statements for $M[G]$ over $M$, and the poset $\PP \upharpoonright r$ is the desired poset in $M$.

Let $H = \{ b \in \mathcal{O} \mid B \in b\}$ and $G = \pi^{-1} (H)$. We will see that $G$ is the desired filter.

We first verify that $G$ is $\PP$-generic over $M$. Let $D$ be a dense subset of $\PP$ in $M$. We will argue that $G \cap D \neq \emptyset$. 
Let $E = \pi [D]$ and $b_E = \bigcup E$. By the definition of $\PP$, the set $E$ is dense in $\mathcal{O}$. We claim that $b_E = \wp ( \RR )$. Suppose not. Then since $D$ is in $M$ and $\pi$ is OD, the set $b_E$ is OD from some real. So $b_E$ is in $\mathcal{O}$. But then $\wp (\RR) \setminus b_E$ is a nonempty set which is in $\mathcal{O}$ incompatible with any element of $E$, contradicting that $E$ is dense in $\mathcal{O}$. 
Hence $b_E = \wp (\RR)$. Since $B$ is in $\wp (\RR)$, we have that $B$ is in $b_E$, so there is a $b'$ in $E$ such that $B$ is in $b'$. By the definition of $H$, the condition $b'$ is also in $H$. Hence $H \cap E \neq \emptyset$. Since $G = \pi^{-1} (H)$ and $E = \pi [D]$, it follows that $G \cap D \neq \emptyset$, as desired. Therefore, $G$ is $\PP$-generic over $M$.

We next verify that $M[G]$ is a model of $\AD$. Since $B$ is in $V$, $H = \{ b \in \mathcal{O} \mid B \in b\}$, and $G = \pi^{-1} (H)$, it follows that $G$ is in $V$ and $M[G]$ is a submodel of $V$. Since $M$ contains all the reals, so does $M[G]$. Finally, since $V$ is a model of $\AD$, it follows that $M[G]$ is also a model of $\AD$, as desired.

Finally, we verify that $\Theta^M < \Theta^{M[G]}$. Since both $M$ and $M[G]$ are models of $\AD$ containing all the reals, by the Wadge lemma under $\ZF + \AD$, it is enough to see that there is a set of reals in $M[G] \setminus M$. Since $B$ is not in $M$, it suffices to argue that $B$ is in $M[G]$. 
For each real $x$, let $b_x = \{ A \in \wp (\RR) \mid x \in A\}$. Then $b_x$ is OD from $x$, so $b_x$ is in $\mathcal{O}$. 
By the choice of $\pi$, for each real $x$, there is an ordinal $\alpha$ such that $\pi (\alpha , x) = b_x$. For each real $x$, let $\alpha_x$ be the least ordinal with $\pi (\alpha_x , x) = b_x$. Then since $\pi$ and $\mathcal{O}$ are OD, the sequence $( \alpha_x \mid x \in \RR)$ is OD and is in $M = \HOD_{\RR}$. 
From the sequence $(\alpha_x \mid x \in \RR)$ and $G$, one can compute the set $B$ as follows: for any real $x$,
\begin{align*}
x \in B \iff B \in b_x \iff b_x \in H \iff (\alpha_x , x) \in G.
\end{align*} 
Therefore, the set $B$ is in $M[G]$, as desired.

We have verified that $G$ is the desired filter, and this completes the proof of Theorem~\ref{thm:increasing-Theta-example}.
\end{proof}

\section{On forcings on the reals}\label{sec:the-reals}

In this section, we prove the following theorem which answers a question by Chan and Jackson~\cite[Question~5.7]{MR4242147}:
\begin{thm}\label{thm:the-reals}
Assume $\mathsf{ZF}+\mathsf{AD}$. Let $\PP$ be any non-trivial poset which is a surjective image of $\mathbb{R}$ and $G$ be any $\mathbb{P}$-generic filter over $V$. Then $\AD$ fails in $V [G]$.
\end{thm}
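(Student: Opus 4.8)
The plan is a proof by contradiction: fix a $\PP$-generic filter $G$ over $V$ and assume that $\AD$ holds in $V[G]$. I would begin with routine normalizations. Shrinking below a condition, we may assume $\PP$ forces a new set of ordinals, and, choosing a surjection $f\colon\RR\to\PP$ and putting $x\preceq y\iff f(x)\le_{\PP}f(y)$, we may replace $\PP$ by the forcing-equivalent preorder $(\RR,\preceq)$ on $\RR$ (replacing $G$ accordingly), since $f$ induces an isomorphism between the separative quotients. If $\PP$ is wellorderable then, being a wellorderable surjective image of $\RR$, it has cardinality $<\Theta^{V}$, and \cite[Theorem~3.2]{MR4242147} already gives that $\AD$ fails in $V[G]$; so assume $\PP$ is not wellorderable. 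If $\Theta^{V}$ is regular, Theorem~\ref{thm:chan-Jackson} finishes the proof. Thus the remaining --- and main --- case is that $\Theta^{V}$ is singular and $\PP$ is not wellorderable.

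Suppose first, in this case, that $\PP$ adds no new real. Let $A\subseteq\RR$ be a single set of reals coding the preorder $\PP$, and put $N=\LL(A,\RR)^{V}$. Then $N\models\ZF+\AD$ (a winning strategy for a game in $N$ is a real of $V$, hence of $N$), $\RR^{N}=\RR^{V}$, $N\models$``$V=\LL(\wp(\RR))$'', and $\PP\in N$; since $\PP$ is non-trivial over $V\supseteq N$, it is non-trivial over $N$. Because $\PP$ adds no real, $\RR^{N[G]}=\RR^{V}=\RR^{V[G]}$, and hence $N[G]\models\AD$. Now if $\Theta^{N}$ is regular, Theorem~\ref{thm:chan-Jackson} applied inside $N$ gives the contradiction that $\AD$ fails in $N[G]\subseteq V[G]$. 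If $\Theta^{N}$ is singular, I would argue that $N\models\AD_{\RR}$; granting this, let $\gamma$ be least such that $\PP$ adds a new subset of $\gamma$ --- one checks $\gamma<\Theta^{N}$, using that $\PP$ is then $(<\gamma)$-distributive and that, $\Theta^{N}$ being singular of cofinality $\lambda<\Theta^{N}$, a new subset $B$ of $\gamma=\Theta^{N}$ would make $\langle B\cap\theta_{\xi}:\xi<\lambda\rangle$ (for $\langle\theta_{\xi}:\xi<\lambda\rangle$ cofinal in $\Theta^{N}$) a forbidden new $\lambda$-sequence of ground-model sets. Then, by the Coding Lemma (Theorem~\ref{thm:CodingLemma}) together with Solovay's Theorem~\ref{thm:Solovay}, the task performed by $\PP$ can be carried out over $N$ by a wellorderable Vop\v{e}nka-type poset of cardinality $<\Theta^{N}$, as in Lemma~\ref{lem:ADR-countable-sequences}, so \cite[Theorem~3.2]{MR4242147} applies in $N$. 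Either way $\AD$ fails in $V[G]$, a contradiction.

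If instead $\PP$ adds a new real, the passage to $N$ no longer preserves $\AD$ (the strategies needed in $N[G]$-games may be reals of $V[G]\setminus N[G]$), and a direct argument is required. Here one would try to show that, in $V[G]$, the set $\RR^{V}$ witnesses a failure of some regularity property that $\AD$ guarantees --- e.g.\ that $\PP$ adds an injection of $\omega_{1}$ into $\RR$, or a non-self-dual hierarchy of sets of reals of length exceeding $\Theta^{V}$ --- exploiting that $\PP$, being a surjective image of $\RR$, is ``definable'' over some $\LL(A,\RR)$, together with a fusion/mutual-genericity argument. I do not see how to make this precise, and regard it as part of the difficulty.

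The main obstacle is therefore the entire singular case, concentrated in two places. In the no-new-real sub-case the crux is that, without $\AD^{+}$ --- hence without the familiar equivalence ``$\Theta$ singular $\iff\AD_{\RR}$'' --- one must nonetheless establish $N\models\AD_{\RR}$, or at least derive directly from $\AD$ the Coding-Lemma consequences needed to exhibit $\PP$ as a wellorderable poset of size $<\Theta$; and in the new-real sub-case there is no result one can simply cite. I expect the actual proof to contain a self-contained strengthening of Theorem~\ref{thm:chan-Jackson} that drops the regularity hypothesis --- presumably by running the Chan--Jackson argument cofinally below $\Theta$ and using the measurable cardinals unbounded in $\Theta$ supplied by Theorem~\ref{thm:Moschovakis} --- which would subsume both sub-cases at once.
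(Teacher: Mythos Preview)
Your proposal has genuine gaps, and the paper's argument takes a different and much cleaner route.

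First, a point you overlook: in any model of the form $\LL(A,\RR)$ (with $A$ a set of reals, or finitely many) satisfying $\AD$, the ordinal $\Theta$ is \emph{automatically} regular. This follows from the existence of a definable-from-$A$ surjection $\Ord\times\RR\to\LL(A,\RR)$: given a cofinal $f\colon\gamma\to\Theta$ with $\gamma<\Theta$, one can uniformly select surjections $\RR\to f(\alpha)$ and assemble a surjection $\RR\to\Theta$, a contradiction. Thus your ``$\Theta^{N}$ singular'' sub-case is vacuous, and no appeal to $\AD_{\RR}$ or $\AD^{+}$ is ever needed once you are inside such an $N$.

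Second, and more importantly, your ``$\PP$ adds a new real'' sub-case is not handled, and your expectation that the proof strengthens Theorem~\ref{thm:chan-Jackson} by working cofinally below $\Theta$ is wrong. The paper's decomposition is entirely different: it splits on whether $\RR^{V}$ is \emph{uncountable} or \emph{countable} in $V[G]$.

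If $\RR^{V}$ is uncountable in $V[G]$, then by the perfect set property (from $\AD$ in $V[G]$) it contains a perfect set $[T]$; a single real $r_{0}$ coding $T$ then satisfies $\RR^{V[G]}\subseteq\LL(\RR^{V},r_{0})$, since any $x\in 2^{\omega}\cap V[G]$ is recovered from $r_{0}$ together with the branch of $T$ that follows $x$ through the splitting nodes---and that branch lies in $[T]\subseteq\RR^{V}$. Now take $A\subseteq\RR^{V}$ coding a $\PP$-name for $r_{0}$ and set $M=\LL(\RR^{V},\PP,A)$. Then $M\models\AD$, $\Theta^{M}$ is regular (by the first point above), and $M[G]\supseteq\RR^{V[G]}$, hence $M[G]\models\AD$; Theorem~\ref{thm:chan-Jackson} applied in $M$ gives the contradiction. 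This perfect-set trick is the idea you are missing: it subsumes both your ``no new real'' and ``new real'' sub-cases in one stroke.

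If $\RR^{V}$ is countable in $V[G]$, the paper gives a short direct counting argument with no use of Chan--Jackson at all: $\Theta^{V}\le\omega_{1}^{V[G]}$, and one shows $\Theta^{V[G]}\le(\Theta^{+})^{V}$ by noting that any $f\colon\RR^{V[G]}\to(\Theta^{+})^{V}$ in $V[G]$ factors (via $\PP$-names, which are coded by sets of reals in $V$) through some $g\colon\wp(\RR)^{V}\times\RR^{V}\to(\Theta^{+})^{V}$ in $V$; stratifying $\wp(\RR)^{V}$ by Wadge rank shows $\rng(g)$ has size at most $\Theta^{V}$. But $\AD$ in $V[G]$ would force $\Theta^{V[G]}>\omega_{2}^{V[G]}\ge(\Theta^{+})^{V}$, a contradiction.
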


\begin{proof}[Proof of Theorem~\ref{thm:the-reals}]

Let $\PP$ be any non-trivial poset which is a surjective image of $\RR$ and $G$ be any $\PP$-generic filter over $V$. We will show that $\AD$ fails in $V[G]$.
Since $\PP$ is a surjective image of $\RR$, there is a poset on $\RR$ which is forcing equivalent to $\PP$. Hence we may assume $\PP$ is a poset on $\RR$.

Towards a contraction, we assume that $\AD$ holds in $V[G]$.


\begin{case}\label{RV-uncountable}
When the set $\RR^V$ is uncountable in $V[G]$.
\end{case}

Here is the key point:
\begin{claim}\label{key-claim}
There is a real $r_0$ in $V[G]$ such that $\RR^{V[G]} \subseteq \LL (\RR^V , r_0)$.
\end{claim}

\begin{proof}[Proof of Claim~\ref{key-claim}]
Since $V[G]$ satisfies $\AD$, the set $\RR^V$ has the perfect set property in $V[G]$. 
Since $\RR^V$ is uncountable in $V[G]$, the set $\RR^V$ contains a perfect set $C$ in $V[G]$. 
Let $r_0$ code a perfect tree $T$ on $2 = \{0,1\}$ with $[T] = C$ in $V[G]$.

We will show that $\RR^{V[G]} \subseteq \LL (\RR^V , r_0)$.
Let $x$ be any element of $2^{\omega}$ in $V[G]$. We will see that $x$ is in $\LL ( \RR^V , r_0)$. 

We say a node $t \in T$ is {\it splitting in $T$} if both $t^{\frown} \langle 0\rangle$ and $t^{\frown} \langle 1 \rangle $ are in $T$. 
Let $\{ t_s \in T \mid s \in 2^{<\omega} \}$ be the set of all splitting nodes in $T$ such that if $s_1 $ is a subsequence of $s_2$ in $2^{<\omega}$, then $t_{s_1}$ is a subsequence of $t_{s_2}$ in $T$.
Let $y = \bigcup \{ t_{x \upharpoonright n} \mid n < \omega \}$. Then $y$ is in $[T]$. Since $[T] =C \subseteq \RR^V$, the real $y$ is in $\RR^V$. 
However, for all $n< \omega$ and $k \in 2 = \{ 0,1 \}$, 
\begin{align*}
x(n) = k \iff t_{(x\upharpoonright n)^{\frown} \langle k \rangle} \subseteq y.
\end{align*}
Hence $x$ can be simply computed from $y$ and $T$. So $x \in \LL [y, T]$. Since $\LL [y, T] \subseteq \LL [y, r_0] \subseteq \LL (\RR^V, r_0)$, the real $x$ is in $\LL (\RR^V , r_0)$, as desired.

This completes the proof of Claim~\ref{key-claim}.
\end{proof}

Continuing to argue in Case~\ref{RV-uncountable}, let $r_0$ be a real in $V[G]$ such that $\RR^{V[G]} \subseteq \LL (\RR^V , r_0)$ as in Claim~\ref{key-claim}.
Since the poset $\PP$ is on $\RR^V$, there is a $\PP$-name $\dot{x}$ such that $\dot{x}^G = r_0$ and $\dot{x}$ is coded by some set $A$ of reals in $V$. 
Then setting $M = \LL (\RR^V , \PP , A)$, we have that $M$ is an inner model of $V$ satisfying $\AD$ and the statement \lq\lq $\Theta$ is regular''. 
However, since $\RR^{V[G]} \subseteq \LL (\RR^V , r_0) \subseteq M[G] \subset V[G]$ and we assumed that $V[G]$ satisfies $\AD$, the model $M[G]$ also satisfies $\AD$, contradicting Theorem~\ref{thm:chan-Jackson}.
Therefore, the assumption that $V[G]$ satisfies $\AD$ was wrong and $\AD$ must fail in $V[G]$, as desired.

This finsihes the arguments for Theorem~\ref{thm:the-reals} in Case~\ref{RV-uncountable}.

\begin{case}\label{RV-countable}
When the set $\RR^V$ is countable in $V[G]$.
\end{case}

Since $\RR^V$ is countable in $V[G]$, any ordinal $\alpha$ below $\Theta^V$ is countable in $V[G]$ as well. Hence $\Theta^V \le \omega_1^{V[G]}$.

We will show that $\Theta^{V[G]} \le (\Theta^+)^V$, which would contradict the assumption that $\AD$ holds in $V[G]$, because $\AD$ in $V[G]$ would imply that $\Theta^{V[G]} >  \omega_2^{V[G]} \ge (\Theta^+)^V$ since $\Theta^V \le \omega_1^{V[G]}$.

To see that $\Theta^{V[G]} \le (\Theta^+)^V$, let $f\colon \RR^{V[G]} \to (\Theta^+)^V$ be any function in $V[G]$. We will show that $f$ is not surjective. 
As in the arguments in Case~\ref{RV-uncountable}, since $\PP$ is on $\RR$, any real in $V[G]$ can be coded by a set of reals in $V$. Hence we may assume that $f\colon \wp (\RR)^V \to (\Theta^+)^V$. 
Also, since $\PP$ is on $\RR^V$, there is a function $g \colon \wp (\RR)^V \times \RR^V \to (\Theta^+)^V$ in $V$ such that $\rng (f) \subseteq \rng (g)$. Therefore, it is enough to see that $g$ is not surjective in $V$.

We now work in $V$. To see that $g$ is not surjective, for each $\alpha < \Theta$, let $W_{\alpha}  = \{ B \in \wp (\RR) \mid |B|_{\text{W}} = \alpha \}$, where $|B|_{\text{W}}$ is the Wadge ordinal of $B$. Then each $W_{\alpha}$ is a surjective image of $\RR$ and so is the set $R_{\alpha} = \{ g(B, x) \mid B \in W_{\alpha}, x\in \RR \}$. 
Hence, for every $\alpha < \Theta$, the order type of $R_{\alpha }$ is less than $\Theta$, and $\rng (g) = \bigcup_{\alpha < \Theta} R_{\alpha}$ is a surjective image of $\Theta \times \Theta$. 
Therefore, $\rng (g)$ is of cardinality at most $\Theta$ which is smaller than $\Theta^+$. Hence $g$ is not surjective in $V$, as desired.

This finishes the arguments for Theorem~\ref{thm:the-reals} in Case~\ref{RV-countable}.

This completes the proof of Theorem~\ref{thm:the-reals}.
\end{proof}

\section{On forcings adding a subset of $\Theta$}\label{sec:subset-of-Theta}

In this section, we prove the following theorems:
\begin{thm}\label{thm:subset-of-Theta-positive}
Assume $\mathsf{ZF}+\mathsf{AD}^+ + \lq\lq V = \mathrm{L} \bigl(\wp (\mathbb{R})\bigr)"$. Suppose that $\Theta$ is regular. Then there is a poset $\mathbb{P}$ on $\Theta$ which adds a subset of $\Theta$ while presering $\mathsf{AD}$, i.e., for any $\mathbb{P}$-generic filter $G$ over $V$, there is a subset of $\Theta^{V}$ which belongs to $V[G] \setminus V$ and $\AD$ holds in $V[G]$.
\end{thm}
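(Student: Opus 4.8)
The plan is to split into two cases according to whether $\AD_{\RR}$ holds in $V$. Throughout, recall that by Theorem~\ref{thm:AD+ADRfail}, if $\AD_{\RR}$ fails, then $V = \LL(T, \RR)$ for some set $T$ of ordinals, while if $\AD_{\RR}$ holds, the regularity of $\Theta$ together with $V = \LL(\wp(\RR))$ pins down the structure of $V$ via the reflection results of Woodin. In both cases the poset $\PP$ will be a Vop\v{e}nka-type algebra living in an appropriate $\HOD$-like inner model $N$ which is a model of $\ZFC$, arranged so that $N \subseteq V \subseteq N[H]$ for some $N$-generic $H$, and so that the generic object collapses $\RR^V$ (or $(\gamma^\omega)^V$) to become countable in the outer model. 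The key point throughout is that $\PP$ is $\Theta$-c.c.\ in $N$ (Lemma~\ref{lem:Qomega}(1),(2) and Lemma~\ref{lem:ADR-countable-sequences}(1)) and the tail of the iteration $\QQ$ from Theorem~\ref{thm:HODLA} (or its analogue) is sufficiently closed, so Lemma~\ref{lem:useful} applies to transfer genericity back and forth across $V$.

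\textbf{Case~\ref{case:ADRfail}: $\AD_{\RR}$ fails.} Here I would take $\PP = \Add(\Theta, 1)$ in $\HOD$ — equivalently, by Theorem~\ref{thm:HODLA}, in $\LL[Z]$. First argue that $\PP$ is a poset on $\Theta$ that is $<$$\Theta$-closed and of size $\Theta$ in $\HOD$ (using regularity of $\Theta$), and that any $\PP$-generic $G$ over $V$ adds a new subset of $\Theta$ — indeed the generic function itself, which is not in $V$ since $\HOD \subseteq V$ and a density argument in $\HOD$ shows no condition decides the generic. The main work is showing $\AD$ holds in $V[G]$. For this, fix the poset $\QQ$ and generic $H$ with $\HOD \subseteq V \subseteq \HOD[H]$ from Theorem~\ref{thm:HODLA}. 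Use Lemma~\ref{lem:Qomega}(3): there is a $\QQ_\omega$-generic $H'$ over $\HOD$ (or over $\HOD_{\{T\}}$, absorbing $T$ into the picture via Theorem~\ref{thm:V=LTR}) making $\RR^V$ countable in $\HOD[H']$. The crucial structural fact I would establish is that $V[G]$ itself can be realized as $\LL(\wp(\RR)^{V[G]})$ where $\RR^{V[G]} = \RR^V$ — i.e.\ $\PP$ adds no reals, since $\PP$ is $<$$\Theta$-closed in $\HOD$ hence (by Lemma~\ref{lem:useful} applied with the $\Theta$-c.c.\ Vop\v{e}nka algebra and the closed $\PP$) adds no new bounded subsets of $\Theta$, in particular no new reals. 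Then $\wp(\RR)^{V[G]} \supseteq \wp(\RR)^V$ may a priori be larger; the heart of the argument is that it is not, because any set of reals in $V[G]$ has an $\OD^{V[G]}$-from-a-real construction via a name, and the name can be coded inside $\HOD[G] = \HOD[\text{Cohen subset of }\Theta]$, which by homogeneity and the $\Theta$-c.c./closure interplay does not enlarge $\wp(\RR)$. Thus $\wp(\RR)^{V[G]} = \wp(\RR)^V$, every set of reals in $V[G]$ is already in $V$, and $\AD$ is downward/upward absolute between $V$ and $V[G]$ for statements about sets of reals — giving $\AD^{V[G]}$.

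\textbf{Case~\ref{case:ADR}: $\AD_{\RR}$ holds.} Since $\Theta$ is regular, take $\PP = \Add(\Theta, 1)$ in $\HOD$ again (this is the case Cunningham asked about under $V = \LL(\RR)$, but here we are in the $\AD_{\RR}$ world). The argument parallels Case~\ref{case:ADRfail}, but now I would use Lemma~\ref{lem:ADR-countable-sequences} and Lemma~\ref{thm:ADR-countable-sequences} in place of Lemma~\ref{lem:Qomega}: every set of reals $C$ is $\OD$ from some $s \in \Theta^\omega$ and lies in $\HOD_{\{s\}}(\RR)$, and by Lemma~\ref{lem:ADR-countable-sequences}(2) the parameter $s$ is itself generic over $\HOD$ via the Vop\v{e}nka algebra $\QQ_1$ for adding an element of $\Theta^\omega$, with $\HOD[h_s] = \HOD_{\{s\}}$. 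Then $\PP = \Add(\Theta,1)^{\HOD}$ is $<$$\Theta$-closed and $\QQ_1$, $\QQ_\omega$ have size $<\Theta$, so Lemma~\ref{lem:useful} again lets us commute $\PP$-genericity with the Vop\v{e}nka generics and conclude $\PP$ adds no new reals and no new sets of reals to $V$. That $\PP$ adds a new subset of $\Theta$ is as before. Hence $\AD$ (indeed $\AD_{\RR}$, hence $\AD$) persists to $V[G]$.

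\textbf{Main obstacle.} The delicate step is proving that $\PP$ adds no new set of reals — equivalently $\wp(\RR)^{V[G]} = \wp(\RR)^V$ — rather than merely no new reals. The closure of $\PP$ in $\HOD$ only immediately bounds new \emph{bounded} subsets of $\Theta$ and hence new reals; to control arbitrary new sets of reals one must go through the coding of such a set by a name, observe the name is essentially a subset of $\Theta$ in an appropriate $\HOD$-extension, and then invoke the $\Theta$-c.c.\ of the Vop\v{e}nka algebras together with the chain-condition/closure transfer of Lemma~\ref{lem:useful} in the model $\HOD[H]$ (or $\HOD_{\{T\}}[H]$) to see that no such name can produce a genuinely new set of reals. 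Getting this interplay exactly right — in particular handling the two cases uniformly and making sure the ground model $N$ over which the Vop\v{e}nka algebra and $\Add(\Theta,1)$ both live is a model of $\ZFC$ with $\Theta$ still regular — is where the care is needed; everything else (the density argument for the new subset, the absoluteness of $\AD$ given equal $\wp(\RR)$) is routine.
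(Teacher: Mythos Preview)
Your overall framework matches the paper's: split on $\AD_{\RR}$, take $\PP=\Add(\Theta,1)$ in an appropriate $\HOD$-like inner model, and reduce everything to showing $\wp(\RR)^{V[G]}=\wp(\RR)^V$. You also correctly flag that last equality as the main obstacle. But the mechanism you propose for overcoming it does not work, and the actual technique is missing from your sketch.

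The gap is in ``$\PP$ adds no new bounded subsets of $\Theta$ (hence no new reals) over $V$.'' You argue this from $<\Theta$-closure of $\PP$ in $\HOD$ together with Lemma~\ref{lem:useful}. But $\PP$ is \emph{not} $<\Theta$-closed in $V$ --- $V$ has many more bounded subsets of $\Theta$ than $\HOD$ does --- and Lemma~\ref{lem:useful} only tells you that the Vop\v{e}nka generic $H$ remains generic over $\HOD[G]$; it does not transfer closure. Concretely, a new real in $V[G]$ lands in $\HOD_{\{T\}}[H][G]$, but $\PP$ is not $<\Theta$-closed in $\HOD_{\{T\}}[H]$ (the $\Theta$-c.c.\ forcing $\QQ_\omega$ destroys the closure), so you cannot conclude the real was already in $\HOD_{\{T\}}[H]\supseteq V$. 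The appeal to ``homogeneity'' for sets of reals is likewise a red herring: the relevant names live in $V$, not in $\HOD$, so automorphisms of $\PP$ in $\HOD$ do not fix them and decide nothing.

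What the paper actually does is a reflection/condensation argument. For a bounded subset $f\colon\gamma\to 2$ in $V[G]$, the name $\dot f$ sits in $\HOD_{\{T\}}[x]$ for a single real $x$ (this is why the paper works over $\HOD_{\{T\}}$ rather than $\HOD$ in Case~\ref{case:ADRfail}: one needs $\HOD_{\{T,x\}}=\HOD_{\{T\}}[x]$ from Theorem~\ref{thm:V=LTR}). One then takes a size-$<\lambda$ elementary hull $X\prec V_\nu^{\HOD_{\{T\}}[G]}$ with $X\cap\lambda\in\lambda$, collapses to $M$, shows $M\in\HOD_{\{T\}}$ (because $\bar g=g\upharpoonright\kappa\in\PP$), and lifts $\pi\colon M\to N$ through the $\bar\QQ_1$-generic $\bar h_x$ using the $\lambda$-c.c.\ to see $f=\bar f\in M[\bar h_x]\subseteq V$. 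For a set of reals $A\in V[G]$, one instead takes a hull $X\prec V_\nu^{V[G]}$ with $\RR\subseteq X$ and $X\cap\lambda\in\lambda$ (this needs $\lambda=\Theta^{V[G]}$ regular, which you must first establish); the collapse has the form $\LL_\mu(\bar T,\RR)[\bar g]$ with $\bar T$ a bounded subset of $\lambda$ (hence in $V$ by the previous claim) and $\bar g\in\PP$, so the collapse --- and therefore $A=\bar A$ --- is in $V$. In Case~\ref{case:ADR} the same scheme runs inside $\HOD_{\{s\}}(\RR)[G]$ after first showing (Subclaim~\ref{subclaim}) that the name for $f$ lands there for some $s\in\Theta^\omega$. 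This hull/collapse/lift machinery is the substantive content of the proof and is absent from your proposal.
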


\begin{thm}\label{thm:subset-of-Theta-negative}
Assume $\mathsf{ZF}+\mathsf{AD}^+ + \lq\lq V = \mathrm{L} \bigl(\wp (\mathbb{R})\bigr)"$. Suppose that $\Theta$ is singular and let $\mathbb{P}$ be $\Add (\Theta, 1)$ in HOD, where $\Add (\Theta , 1) = \{ p \mid p \colon \gamma \to 2 \text{ for some $\gamma < \Theta$} \}$. Then $\AD$ fails in $V [G]$ for any $\mathbb{P}$-generic filter $G$ over $V$.
\end{thm}

\begin{proof}[Proof of Theorem~\ref{thm:subset-of-Theta-positive}]

Throughout the proof of the theorem, we write $\lambda$ for $\Theta^V$.

We prove the theorem by considering the two cases whether $\mathsf{AD}_{\mathbb{R}}$ holds or not.

 \setcounter{case}{0}
\begin{case}\label{case:ADRfail}
When $\mathsf{AD}_{\mathbb{R}}$ fails.
\end{case}

Since $\mathsf{AD}_{\mathbb{R}}$ fails while we assume $\mathsf{AD}^+$ and $V = \mathrm{L}\bigl( \wp(\mathbb{R}) \bigr)$, by Theorem~\ref{thm:AD+ADRfail}, there is a set $T$ of ordinals such that $V = \mathrm{L}(T, \mathbb{R})$. We fix such a $T$ throughout the arguments for Case~\ref{case:ADRfail}.

Let $\mathbb{P}$ be $\Add (\lambda, 1)$ in $\HOD_{\{ T \}}$, where $\Add (\lambda , 1) = \{ p \mid p \colon \gamma \to 2 = \{ 0, 1 \} \text{ for some $\gamma < \lambda$} \}$. 
Since $\mathbb{P}$ is computed in $\HOD_{\{ T \}}$ and $\lambda = \Theta^V$ is inaccessible in $\HOD_{\{ T \}}$, the poset $\mathbb{P}$ can be considered as a poset on $\lambda$. 

We will show that $\PP$ is the desired poset in Case~\ref{case:ADRfail}, i.e., $\PP$ adds a subset of $\lambda = \Theta^V$ while preserving $\AD$ in Case~\ref{case:ADRfail}.

Let $G$ be any $\PP$-generic filter over $V$. Then the function $\bigcup G \colon \lambda \to 2$ can be considered as a subset of $\lambda$ and by the genericity of $G$ over $V$, the subset is not in $V$. Hence the poset $\PP$ adds a new subset of $\lambda$ to $V$.

We will show that $\AD$ holds in $V[G]$. We start with showing that the poset $\PP$ does not add any bounded subset of $\lambda$:
\setcounter{claim}{0}
\begin{claim}\label{no-bounded-subset-in-vg-non-adr}
For any $\gamma < \lambda$, we have $\wp (\gamma)^V = \wp (\gamma)^{V[G]}$. In particular, $\RR^V = \RR^{V[G]}$.
\end{claim}

\begin{proof}[Proof of Claim~\ref{no-bounded-subset-in-vg-non-adr}]

Let $\gamma$ be an ordinal less than $\lambda$ and $f \colon \gamma \to 2$ in $V[G]$. We will show that $f$ is in $V$. 

Let $\dot{f}$ be a $\PP$-name with $\dot{f}^G = f$. Since $\PP$ can be seen as a poset on $\lambda$ and $f \colon \gamma \to 2$, we may asssume that $\dot{f}$ is a subset of $\lambda \times \gamma \times 2$. 
Since $V = \LL (T, \RR)$ and $\dot{f}$ is in $V$, we have that $\dot{f}$ is $\OD_{\{T,x\}}$ for some real $x$. Then since $\dot{f}$ is essentially a set of ordinals, $\dot{f}$ is in $\HOD_{\{T,x\}}$. 
By Theorem~\ref{thm:V=LTR}, we have that $\HOD_{\{T,x\}} = \HOD_{\{ T \}} [x]$ and $\HOD_{\{ T \}} = \LL [T, Z]$ for some subset $Z$ of $\lambda$. 
Since $f = \dot{f}^G$, it follows that $f$ is in $\HOD_{\{T,x\}} [G] = \HOD_{\{ T \}} [x][G]$. 

Let $\QQ_1$ be the Vop\v{e}nka algebra for adding an element of $2^{\omega}$ in $\HOD_{\{ T \}}$. Then the real $x$ induces a $\QQ_1$-generic filter $h_x$ over $\HOD_{\{ T \}}$ such that $x \in \HOD_{\{ T \}}[h_x]$. 
Since $G$ was chosen to be $\PP$-generic over $V$, it is also $\PP$-generic over $\HOD_{\{ T \}} [h_x]$. Hence the filter $G \times h_x$ is $\PP \times \QQ_1$-generic over $\HOD_{\{ T \}}$ and $\HOD_{\{ T \}} [x][G]  \subseteq HOD_{\{ T \}} [h_x][G]  = \HOD_{\{ T \}} [G] [h_x]$. 


Since $\dot{f}$ is in $\HOD_{\{ T \}} [x] \subseteq \HOD_{\{ T \}} [h_x]$, there is a $\QQ_1$-name $\tau$ in $\HOD_{\{ T \}}$ such that $\tau^{h_x} = \dot{f}$. 
Let $\nu$ be a sufficiently big cardinal in $\HOD_{\{ T \}} [G]$ and let $N$ be $V_{\nu}$ in $\HOD_{\{ T \}} [G]$. 
By the $<$$\lambda$-closure of $\PP$ in $\HOD_{\{ T \}}$, the ordinal $\lambda$ is regular in $\HOD_{\{ T \}}[G]$. 
Since $\HOD_{\{ T \}}[G]$ is a model of $\ZFC$, there is an elementary substructure $X$ of $N$ in $\HOD_{\{ T \}} [G]$ such that $\gamma + 1 \subseteq X$, $X \cap \lambda \in \lambda$, $X$ is of size less than $\lambda$, and $T, Z, G, \PP, \QQ_1 , \tau  \in X$. 
Let $M$ be the transitive collapse of $X$ and let $\pi \colon M \to X$ be the inverse of the collapsing map. 
Then letting $\kappa = X \cap \lambda$, we have that $\kappa$ is the critical point of $\pi$ and $\pi (\kappa) = \lambda$. 
For any $a \in X$, we write $\bar{a}$ for $\pi^{-1} (a)$, i.e., $\pi (\bar{a}) = a$. 

We claim that $M$ is in $\HOD_{\{ T \}}$. Let $g = \bigcup G$. Then by the genericity of $G$, we have $g \colon \lambda \to 2$. Since $G$ is simply definable from $g$, we have $\HOD_{\{ T \}} [G] = \HOD_{\{ T \}} [g]$. 
Recall that $\HOD_{\{ T \}} = \LL [T, Z]$, so $\HOD_{\{ T \}} [G] = \LL [T, Z][G] = \LL [T, Z][g]$. Hence the model $M$ is of the form $\LL_{\mu} [\bar{T}, \bar{Z}][\bar{g}]$ for some $\mu$. 
Since $Z$ is a subset of $\lambda$, we have $\bar{Z} = Z \cap \kappa$ and hence $\bar{Z} \in \HOD_{\{ T \}}$. 
Since $\bar{T}$ is a set of ordinals of size less than $\lambda$ in $\HOD_{\{ T \}} [G]$, by the $<$$\lambda$-closure of $\PP$ in $\HOD_{\{ T \}}$, the set $\bar{T}$ is in $\HOD_{\{ T \}}$.  
Since $g \colon \lambda \to 2$, we have $\bar{g} = g \upharpoonright \mu$, which is in $\PP$. So $\bar{g}$ is in $\HOD_{\{ T\}}$. 
Since $M = \LL_{\mu} [\bar{T}, \bar{Z} , \bar{g}]$, the model $M$ is in $\HOD_{\{ T \}}$, as desired.

Let $\bar{h}_x = \{ \bar{q} \mid q \in h_x \cap X\}$. We claim that $\bar{h}_x$ is $\bar{\QQ}_1$-generic over $M$. 
Recall that $\QQ_1$ is the Vop\v{e}nka algebra for adding an element of $2^{\omega}$ in $\HOD_{\{ T \}}$. By Lemma~\ref{lem:Qomega}, we may assume that $\QQ_1$ is on $\Theta^V = \lambda$ and $\QQ_1$ has the $\lambda$-c.c. in $\HOD_{\{T\}}$. 
Let $A$ be a maximal antichain in $\bar{\QQ}_1$ such that $A$ is in $M$. We will verify that $A \cap \bar{h}_x \neq \emptyset$. 
Since $\PP$ is $<$$\lambda$-closed and $\QQ_1$ has the $\lambda$-c.c. in $\HOD_{\{ T \}}$, by Lemma~\ref{lem:useful}, the poset $\QQ_1$ still has the $\lambda$-c.c. in $\HOD_{\{ T \}} [G]$. 
By elementarity of $\pi$, the poset $\bar{\QQ}_1$ has the $\kappa$-c.c. in $M$. In particular, the antichain $A$ is of size less than $\kappa$ in $M$. 
Since $\QQ_1$ is on $\lambda$, the poset $\bar{\QQ}_1$ is on $\kappa$ in $M$. So the antichain $A$ is a bounded subset of $\kappa$. Since $\kappa$ is the critical point of $\pi$, we have that $\pi (A) = A$. 

By elementarity of $\pi$, the antichain $\pi (A) = A$ is maximal in $\QQ_1$ in $\HOD_{\{ T \}} [G]$.
Since $M$ is in $\HOD_{\{ T \}}$ and $A$ is in $M$, the antichain $A$ is maximal in $\QQ_1$ in $\HOD_{\{ T \}}$ as well. 
By the genericity of $h_x$ over $\HOD_{\{ T \}}$, the set $A \cap h_x$ is nonempty. Let $q$ be an element of $A \cap h_x$. 
Since $A$ is in $M$ and $M$ is transitive, the condition $q$ is in $M$. 
Since $\QQ_1$ is on $\lambda$, $\pi (\kappa) = \lambda$, and $q \in h_x \cap M$, we have that $\pi (q) = q$ and hence $q \in \bar{h}_x$.
Therefore, $q \in A \cap \bar{h}_x$ and the set $A \cap \bar{h}_x$ is nonempty, as desired.

Since the poset $\bar{\QQ}_1$ has the $\kappa$-c.c. in $M$, by a standard argument, one can lift the embedding $\pi \colon M \to N$ to an elementary embedding $\hat{\pi} \colon M[\bar{h}_x] \to N [h_x]$ such that $\hat{\pi} (\bar{h}_x) = h_x$. 

We now argue that the function $f$ is in $V$. It is enough to verify that $f$ is in $M [\bar{h}_x]$ because $M$ is in $\HOD_{\{ T \}}$, $\HOD_{\{ T \}} \subseteq V$, and $\bar{h}_x = \{ \bar{q} \mid q \in h_x \cap X \} = \{ q \mid q \in h_x \cap X \} = \bar{\QQ}_1 \cap h_x$. 
Recall that $\tau$ is a $\QQ_1$-name in $\HOD_{\{ T \}}$ such that $\tau^{h_x} = \dot{f}$ and that $\dot{f}$ is a $\PP$-name in $\HOD_{\{ T \}} [h_x]$ such that $\dot{f}^G = f$. 
Since $\tau$ is in $X$, letting $\dot{g} = \bar{\tau}^{\bar{h}_x}$ and $g = \dot{g}^{\bar{G}}$, we have that $\hat{\pi} (g) = f$. By elementarity of $\hat{\pi}$,  the set $g$ is a function from $\pi^{-1} (\gamma)$ to $2$. 
We now verify that $f=g$, which would imply that $f$ is in $M[\bar{h}_x]$ because $g$ is in $M[\bar{h}_x]$.
Since $\gamma + 1 \subseteq X$, we have that $\pi \upharpoonright (\gamma + 1) = \id$. 
Hence $\pi^{-1} (\gamma) = \gamma$ and $g \colon \gamma \to 2$. Also, since $\hat{\pi} (g) = f$, by elementarity of $\hat{\pi}$, for any $\alpha < \gamma$ and $i < 2$, we have that $g (\alpha) = i$ if and only if $f( \alpha) = i$. Therefore, $f = g $, as desired. 




This completes the proof of Claim~\ref{no-bounded-subset-in-vg-non-adr}.
\end{proof}

By Claim~\ref{no-bounded-subset-in-vg-non-adr}, we know that $\RR^V = \RR^{V[G]}$. So we simply write $\RR$ for $\RR^V$ or $\RR^{V[G]}$.
Recall that we write $\lambda$ for $\Theta^V$.

We now show that $\PP$ does not add any set of reals to $V$:
\begin{claim}\label{no-set-of-reals-in-vg-non-adr}
The equality $\wp (\RR)^V = \wp (\RR)^{V[G]}$ holds.
\end{claim}

\begin{proof}[Proof of Claim~\ref{no-set-of-reals-in-vg-non-adr}]

Let $A$ be any set of reals in $V[G]$. We will show that $A$ is in $V$ as well.

We first claim that $\lambda$ is regular in $V[G]$ and $\lambda = \Theta^{V[G]}$. 
Let $\QQ_{\omega}$ be the finite support direct limit of Vop\v{e}nka algebras for adding an element of $2^{\omega}$ in $\HOD_{\{ T \}}$. 
Then by Lemma~\ref{lem:Qomega}, the poset $\QQ_{\omega}$ has the $\lambda$-c.c. in $\HOD_{\{ T \}}$ and there is a $\QQ_{\omega}$-generic filter $H$ over $\HOD_{\{ T \}}$ such that $V = \LL (T, \RR) \subseteq \HOD_{\{ T \}} [H]$ and the set $\RR$ is countable in $\HOD_{\{ T \}} [H]$. 
Since $\PP$ is $<$$\lambda$-closed in $\HOD_{\{ T \}}$ and $\QQ_{\omega}$ has the $\lambda$-c.c. in $\HOD_{\{ T \}}$, by Lemma~\ref{lem:useful}, the poset $\QQ_{\omega}$ still has the $\lambda$-c.c. in $\HOD_{\{ T \}} [G]$ and the filter $H$ is $\QQ_{\omega}$-generic over $\HOD_{\{ T \}} [G]$ as well. 
Hence $\lambda$ is still regular uncountable in $\HOD_{\{ T \}} [G][H]$, the filter $G \times H$ is $\PP \times \QQ_{\omega}$-generic over $\HOD_{\{ T \}}$, and $\HOD_{\{ T \}} [G][H] = \HOD_{\{ T \}} [H][G]$. Therefore, $\lambda$ is still regular uncountable in $\HOD_{\{ T \}} [H][G]$. Since $V[G] \subseteq \HOD_{\{ T \}} [H][G]$, the ordinal $\lambda$ is regular in $V[G]$ as well.
Also, since $\RR$ is countable in $\HOD_{\{ T \}} [H]$ and $\RR^V = \RR^{V[G]}$ while $V[G] \subseteq \HOD_{\{ T \}} [H][G]$, the ordinal $\Theta^{V[G]}$ is at most $\omega_1$ in $\HOD_{\{ T \}}[H][G]$.
Since $\lambda$ is regular uncountable in $\HOD_{\{ T \}} [H][G]$, we have that $\Theta^{V[G]} \le \lambda$. Since $V \subseteq V[G]$ and $\lambda = \Theta^V$, the inequality $\lambda \le \Theta^{V[G]} $ also holds. Hence $\lambda = \Theta^{V[G]}$, as desired.

Let $\nu$ be a sufficiently large cardinal in $V[G]$ and let $N$ be $V_{\nu}$ in $V[G]$. Since $V = \LL (T, \RR)$, the model $N$ is of the form $\LL_{\nu} (T, \RR) [G]$. Since every element of $N$ is definable from $T, G$, an ordinal, and some real while $\lambda$ is regular in $V[G]$ and $\lambda = \Theta^{V[G]}$, one can find an elementary substructure $X$ of $N$ in $V[G]$ such that $\RR \subseteq X$, $\lambda \cap X \in \lambda$, the structure $X$ is a surjective image of $\RR$, and $T, \PP , G,  A \in X$. 
Let $M$ be the transitive collapse of $X$ and let $\pi \colon M \to X$ be the inverse of the collapsing map. 
Then letting $\kappa = \lambda \cap X$, the critical point of $\pi$ is $\kappa$ and $\pi (\kappa) = \lambda$. 
For any $a$ in $X$, we write $\bar{a}$ for $\pi^{-1} (a)$, i.e., $\pi ( \bar {a}) = a$.

We will finish arguing that the set $A$ is in $V$. 
Since $\RR$ is contained in $M$ and $\pi (\bar{A}) = A$, we have $\bar{A} = A$ and the set $A$ is in $M$.
Hence it is enough to verify that the model $M$ is in $V$. 
Recall that $g = \bigcup G$ and $g \colon \lambda \to 2$. Since $G$ is simply definable from $g$, we have that $N = \LL_{\nu} (T, \RR) [G] = \LL_{\nu} (T, \RR) [g]$. 
Since $N$ is of the form $\LL_{\nu} (T, \RR) [g]$, $X$ is a surjective image of $\RR$ in $V[G]$, and $\lambda = \Theta^{V[G]}$, it follows that $M$ is of the form $\LL_{\mu} (\bar{T}, \RR) [\bar{g}]$ for some ordinal $\mu < \lambda$. 
Since $\mu < \lambda$, the set $\bar{T}$ is a bounded subset of $\lambda$ in $V[G]$. 
By Claim~\ref{no-bounded-subset-in-vg-non-adr}, the set $\bar{T}$ is in $V$ as well.
Since $g \colon \lambda \to 2$ and $\pi (\kappa) = \lambda$, we have that $\bar{g} = g \upharpoonright \kappa$ and $\bar{g}$ is in $\PP$. So $\bar{g}$ is in $V$ as well.
Since $M = \LL_{\mu} (\bar{T}, \RR) [\bar{g}]$, the model $M$ is in $V$, and the set $A$ is in $V$, as desired.


This completes the proof of Claim~\ref{no-set-of-reals-in-vg-non-adr}.
\end{proof}

By Claim~\ref{no-bounded-subset-in-vg-non-adr} and Claim~\ref{no-set-of-reals-in-vg-non-adr}, we have that $\RR^V = \RR^{V[G]}$ and $\wp (\RR)^V = \wp (\RR)^{V[G]}$. 
Since we assume $\AD$ in $V$, the axiom $\AD$ holds in $V[G]$ as well.

This finishes the arguments for Theorem~\ref{thm:subset-of-Theta-positive} in Case~\ref{case:ADRfail} when $\AD_{\RR}$ fails.

\begin{case}\label{case:ADR}
When $\mathsf{AD}_{\mathbb{R}}$ holds.
\end{case}

Recall that we write $\lambda$ for $\Theta^V$.
Let $\mathbb{P}$ be $\Add  (\lambda , 1)$ in HOD, where $\Add  (\lambda , 1) = \{ p \mid p \colon \gamma \to 2 = \{ 0,1 \} \text{ for some $\gamma < \lambda$}\}$. Since $\mathbb{P}$ is computed in HOD and $\lambda = \Theta^V$ is inaccessible in HOD, the set $\mathbb{P}$ can be considered as a poset on $\lambda$.
Let $G$ be a $\mathbb{P}$-generic filter over $V$. We will show that $\mathsf{AD}$ holds in $V[G]$. 

\begin{claim}\label{no-new-set-of-reals-adr}
The forcing $\mathbb{P}$ does not add any new set of reals, i.e., $\wp(\mathbb{R})^{V} = \wp(\mathbb{R})^{V[G]}$.
\end{claim}

\begin{proof}[Proof of Claim~\ref{no-new-set-of-reals-adr}]
We will show that for any $f \colon \mathbb{R}^V \to 2$ in $V[G]$, the function $f$ is also in $V$. 
Since any real in $V[G]$ can be simply coded as a subset of $\RR^V$ in $V[G]$, this will show that $\RR^V = \RR^{V[G]}$ and $\wp (\RR)^V = \wp (\RR)^{V[G]}$ as well.

From now on, we write $\RR$ for $\RR^V$.  
\begin{subclaim}\label{subclaim}
For some sequence $s \in \lambda^{\omega}$, the function $f$ is in $\HOD_{\{ s\}} (\RR) [G]$.
\end{subclaim}
 
\begin{proof}[Proof of Subclaim~\ref{subclaim}]
Let $\dot{f}$ be a $\mathbb{P}$-name with $\dot{f}^G= f$. 
Since $\PP$ can be considered as a poset on $\lambda$, we may assume that $\dot{f}$ can be considered as a subset of $\lambda \times \RR \times 2$. To make it simpler, we regard $\dot{f}$ as a subset of $\lambda \times \RR$. 

Since $V= \LL \bigl( \wp (\RR) \bigr)$, there is a set $A$ of reals such that $\dot{f}$ is OD from $A$. 
For each $\alpha < \lambda$, let $X_{\alpha} = \{ x \in \RR \mid  (\alpha , x) \in \dot{f} \}$ and let $\xi_{\alpha}$ be the least ordinal $\xi < \lambda$ such that $X_{\xi}= X_{\alpha}$. For $\alpha , \beta < \lambda$, we write $\alpha \preceq \beta$ if $\xi_{\alpha} \le \xi_{\beta}$. Then the structure $(\lambda , \preceq)$ is a prewellordering. Let $\pi \colon (\lambda , \preceq ) \to (\gamma , \le)$ be the Mostowski collapsing map. For each $\delta < \gamma$, let $\eta_{\delta} = \min \pi^{-1} (\delta)$ and $Y_{\delta} = X_{\eta_{\delta} }$. Set $Y = ( Y_{\delta} \mid \delta < \gamma)$.  

Since $\dot{f}$ is OD from $A$, so is $\pi$. Also $\pi$ is essentially a set of ordinals, so $\pi$ is in $\HOD_{\{ A \}}$.  

We next verify that there is a set $B$ of reals such that $Y$ is in $\LL (B, \RR)$. 
Since we have $\AD_{\RR}$ in Case~\ref{case:ADR}, by Theorem~\ref{thm:Solovay}, there is a set $B_0$ of reals which is not OD from $A$ and any real. 
Since $\dot{f}$ is OD from $A$, so is $Y$. So each set $Y_{\delta}$ of reals is OD from $A$. 
By the Wadge Lemma under  $\ZF + \AD$, each $Y_{\delta}$ is Wadge reducible to $B_0$. 
In particular, there is a surjection from $\RR$ to $\{ Y_{\delta} \mid \delta < \gamma \}$ in $\LL (B_0, \RR)$. Since the sequence $Y = ( Y_{\delta} \mid \delta < \gamma )$ is injective, there is a surjection $\rho \colon \RR \to \gamma$ in $\LL (B_0, \RR)$ as well. 
Let $B_1 = \{ x \ast y \mid x \in Y_{\rho (y)} \}$, where $x \ast y \, (2n) = x(n)$ and $x \ast y \, (2n+1) = y (n)$ for all $n \in \omega$. 
Then $Y$ is in $\LL (\rho , B_1, \RR)$. 
So letting $B = B_0 \oplus B_1 = \{ x \ast y \mid x \in B_0 \text { and } y \in B_1 \}$, we have that $Y$ is in $\LL (B, \RR)$, as desired.

We now argue that for some sequence $s \in \lambda^{\omega}$, the $\PP$-name $\dot{f}$ is in $\HOD_{\{ s\}} (\RR)$. 
Since $\pi$ is in $\HOD_{\{ A \}}$ and $Y$ is in $\LL (B, \RR)$, letting $C = A \oplus B$, we have that $\pi$ is in $\HOD_{\{ C \}}$ and $Y$ is in $\LL (C, \RR)$. 
Since we have $\AD_{\RR}$ in Case~\ref{case:ADR}, by Lemma~\ref{thm:ADR-countable-sequences}, there is an $s \in (\Theta^V)^{\omega} = \lambda^{\omega}$ such that $C$ is OD from $s$ and that $C$ is in $\HOD_{\{ s\}} (\RR)$. Hence both $\pi$ and $Y$ are in $\HOD_{\{ s\}} (\RR)$. 
Since $\dot{f}$ is simply definable from $\pi$ and $Y$, we have that $\dot{f}$ is in $\HOD_{\{ s\}} (\RR)$, as desired.

Since $f = \dot{f}^G$ and $\dot{f}$ is in $\HOD_{\{ s\}} (\RR)$, we have that $f$ is in $\HOD_{\{ s\}} (\RR) [G]$.

This completes the proof of Subclaim~\ref{subclaim}.
\end{proof}

Since we assume that $\lambda = \Theta^V$ is regular in $V$ and $s \in \lambda^{\omega} \cap V$, we can pick an ordinal $\gamma < \lambda$ such that $s \in \gamma^{\omega}$. 
Let $\QQ_1$ be the Vop\v{e}nka algebra for adding an element of $\gamma^{\omega}$ in $\HOD$ and let $h_s = \{ p \in \QQ_1 \mid s \in \pi_1 (p) \}$, where $\pi_1 \colon \QQ_1 \to \mathcal{O}_1$ is as in Definition~\ref{def:Vop}. Then by Lemma~\ref{lem:ADR-countable-sequences}, we have that $h_s$ is a $\QQ_1$-generic filter over $\HOD$ such that $ \HOD[h_s] = \HOD_{\{ s\}}$. 

\begin{subclaim}\label{subclaim2}
The ordinal $\lambda$ is regular in $\HOD_{\{ s\}} (\RR) [G]$ and there is no surjection from $\RR^V$ to $\lambda$ in $\HOD_{\{ s\}} (\RR) [G]$.
\end{subclaim}

\begin{proof}[Proof of Subclaim~\ref{subclaim2}]

Let $\QQ_{\omega}$ be the finite support limit of the Vop\v{e}nka algebras for adding an element of $\gamma^{\omega}$ in $\HOD$.  Since $\QQ_1$ is a complete suborder of $\QQ_{\omega}$, by Lemma~\ref{lem:ADR-countable-sequences}, there is a $\QQ_{\omega}$-generic filter $H$ over $\HOD $ such that $h_s \in \HOD[H]$ and that the set $(\gamma^{\omega})^V$ is countable in $\HOD[H]$. In particular, $\HOD_{\{ s\}} (\RR) \subseteq \HOD [h_s] (\RR) \subseteq \HOD [H]$ and $\RR^V$ is countable in $\HOD[H]$. Since we have $\AD_{\RR}$ in Case~\ref{case:ADR}, by Lemma~\ref{lem:ADR-countable-sequences}, the poset $\QQ_{\omega}$ is of size less than $\Theta^V = \lambda$. 
Since $\PP$ is $<$$\lambda$-closed in $\HOD$ and $G$ is $\PP$-generic over $\HOD$, we have that any subset of $\QQ_{\omega}$ in $\HOD [G]$ is also in $\HOD$. Hence the filter $H$ is $\QQ_{\omega}$-generic over $\HOD [G]$ as well.

We now argue that  $\lambda$ is regular in $\HOD_{\{ s\}} (\RR) [G]$. 
Since $\PP$ is $<$$\lambda$-closed in $\HOD$ and $G$ is $\PP$-generic over $\HOD$, the ordinal $\lambda$ is still regular in $\HOD[G]$. Also, since $\QQ_{\omega}$ is of size less than $\lambda$ in $\HOD$ and $H$ is $\QQ_{\omega}$-generic over $\HOD [G]$, the ordinal $\lambda$ is also regular in $\HOD [G][H]$. Since $\HOD_{\{ s\}} (\RR) [G] \subseteq \HOD [H][G] = \HOD [G][H]$, the ordinal $\lambda$ is regular in $\HOD_{\{ s\}} (\RR) [G]$, as desired.  

We next show that there is no surjection from $\RR^V$ to $\lambda$ in $\HOD_{\{ s\}} (\RR) [G]$. 
Since $\RR^V$ is countable in $\HOD[H]$ while $\lambda$ is regular uncountable in $\HOD[H][G]$, there is no surjection from $\RR^V$ to $\lambda$ in $\HOD[H][G]$. Since $\HOD_{\{ s\}} (\RR) [G] \subseteq \HOD[H][G]$, there is no surjection from $\RR^V$ to $\lambda$ in $\HOD_{\{ s\}} (\RR) [G]$, as desired.

This completes the proof of Subclaim~\ref{subclaim2}.
\end{proof}

Recall that $\QQ_1$ is the Vop\v{e}nka algebra for adding an element of $\gamma^{\omega}$ and $h_s$ is the $\QQ_1$-generic filter over $\HOD$ derived from $s$ with $\HOD [h_s] = \HOD_{\{ s\}}$. Since we have $\AD_{\RR}$ in Case~\ref{case:ADR}, by Lemma~\ref{lem:ADR-countable-sequences}, the poset $\QQ_1$ is of size less than $\Theta^V = \lambda$ in $\HOD$. So the filter $h_s$ is essentially a bounded subset of $\lambda$. 
Since we assume $\ZF + \AD^+ + \lq\lq V= \LL \bigl( \wp (\RR) \bigr)$'', by Theorem~\ref{thm:HODLA}, there is a set $Z \subseteq \Theta^V = \lambda$ such that $\HOD = \LL [Z]$. So the model $\HOD_{\{ s\}} (\RR) [G]$ is of the form $\LL (Z, h_s ,\RR) [G]$ where $Z$ is a subset of $\lambda$ and $h_s$ is a bounded subset of $\lambda$.
By Subclaim~\ref{subclaim}, the function $f$ is in the model $\HOD_{\{ s\}} (\RR) [G]$. 

We are now ready to finish the arguments for Claim~\ref{no-new-set-of-reals-adr}, which are similar to those for Claim~\ref{no-set-of-reals-in-vg-non-adr}.
Let $\nu$ be a sufficiently big cardinal in $\HOD_{\{ s\}} (\RR) [G]$ and let $N$ be $V_{\nu}$ in $V[G]$. 
Let $g = \bigcup G$. Then by the genericity of $G$, we have $g \colon \lambda \to 2$. Since $G$ is simply definable from $g$, we also have $\HOD_{\{ s \}} (\RR) [G] = \HOD_{ \{ s \}} (\RR) [g]$. 
Since $\HOD_{\{ s\}} (\RR) = \LL (Z, h_s , \RR)$, the model $N$ is of the form $\LL_{\nu} (Z , h_s, \RR) [G] = \LL_{\nu} (Z , h_s , \RR) [g]$. Since every element of $N$ is definable from $Z, h_s, g$, an ordinal, and some real while $\lambda$ is regular in $\HOD_{\{ s\}} (\RR) [G]$ and there is no surjection from $\RR$ to $\lambda$ in $\HOD_{\{ s\}} (\RR) [G]$ by Subclaim~\ref{subclaim2}, one can find an elementary substructure $X$ of $N$ in $\HOD_{\{ s\}} (\RR) [G]$ such that $\RR \subseteq X$, $\lambda \cap X \in \lambda$, the structure $X$ is a surjective image of $\RR$, and $Z, \PP , h_s, G,  f \in X$. 
Let $M$ be the transitive collapse of $X$ and let $\pi \colon M \to X$ be the inverse of the collapsing map. 
Then letting $\kappa = \lambda \cap X$, the critical point of $\pi$ is $\kappa$ and $\pi (\kappa) = \lambda$. 
For any $a$ in $X$, we write $\bar{a}$ for $\pi^{-1} (a)$, i.e., $\pi ( \bar {a}) = a$.

We will finish arguing that the function $f$ is in $V$. 
Since $\RR$ is contained in $M$ and $\pi (\bar{f}) = f$, we have $\bar{f} = f$ and the set $f$ is in $M$.
Hence it is enough to verify that the model $M$ is in $V$. 
Since $N$ is of the form $\LL_{\nu} (Z, h_s,  \RR) [g]$, 
the set $M$ is of the form $\LL_{\mu} (\bar{Z}, \bar{H}_s, \RR) [\bar{g}]$ for some ordinal $\mu$. 
Since $Z$ is a subset of $\lambda$, we have $\bar{Z} = Z \cap \kappa$, which is in $V$. 
The filter $h_s$ is essentially bounded subset of $\lambda$ and $h_s$ is in $X$. So by elementarity of $X$, we have that $h_s \subseteq X$ and it follows that $\bar{h}_s = h_s$, which is also in $V$. 
Since $g \colon \lambda \to 2$, we have $\bar{g} = g \upharpoonright \kappa$ and so $\bar{g}$ is in $\PP$. Hence we have $\bar{g} \in V$. 
Since $M = \LL_{\mu} (\bar{Z}, \bar{h}_s , \RR) [\bar{g}]$, the model $M$ is in $V$, and hence the function $f$ is in $V$, as desired.

This completes the proof of Claim~\ref{no-new-set-of-reals-adr}.
\end{proof}

By Claim~\ref{no-new-set-of-reals-adr}, we have $\wp(\mathbb{R})^{V[G]} = \wp(\mathbb{R})^{V}$. Since $\AD$ holds in $V$, so does in $V[G]$, as desired.

This finishes the arguments for Theorem~\ref{thm:subset-of-Theta-positive} in Case~\ref{case:ADR} when $\mathsf{AD}_{\mathbb{R}}$ holds.

This completes of the proof of Theorem~\ref{thm:subset-of-Theta-positive}. 
\end{proof}


\begin{proof}[Proof of Theorem~\ref{thm:subset-of-Theta-negative}]

Let $G$ be any $\PP$-generilc filter over $V$. We will show that $\AD$ fails in $V[G]$.
To derive a contradiction, we assume $\AD$ in $V[G]$.

Since we have $\ZF + \AD^+ + \lq\lq V = \LL \bigl( \wp (\RR) \bigr)$'' in $V$, by Theorem~\ref{thm:increasing-Theta-destruction}, it is enough to show that $\PP$ increases $\Theta$, i.e., $\Theta^V < \Theta^{V[G]}$. 

Let $\gamma$ be the cofinality of $\Theta$ in $V$. Since $\Theta$ is singular in $V$, we have that $\gamma < \Theta$.

We will show that there is an injection from $\Theta^V$ to $\wp (\gamma)^{V[G]}$ in $V[G]$, which would imply $\Theta^V < \Theta^{V[G]}$ as follows: Since we assumed $\AD$ in $V[G]$, by Theorem~\ref{thm:CodingLemma}, there is a surjection from $\RR^{V[G]}$ to $\wp (\gamma)^{V[G]}$ in $V[G]$. 
By the existence of an injection from $\Theta^V$ to $\wp (\gamma)^{V[G]}$, there would be a surjection from $\RR^{V[G]}$ to $\Theta^V$ in $V[G]$. By the definition of $\Theta^{V[G]}$, we would have that $\Theta^V < \Theta^{V[G]}$, as desired.

We will construct a function $\iota \colon \Theta^V \to \wp (\gamma)^{V[G]}$ in $V[G]$ which is verified to be injective. 
Since $\PP = \Add (\Theta , 1)$ in HOD and $G$ is $\PP$-generic over $V$, the set $g = \bigcup G$ is a function from $\Theta^V$ to $2 = \{ 0, 1\}$. 
Since $\gamma$ is the cofinality of $\Theta$ in $V$, we can fix a cofinal increasing sequence $(\beta_{\alpha} \colon \alpha < \gamma )$ in $\Theta$ in $V$. 
For each $\delta < \Theta^V$, let $a_{\delta}$ be the sequence $(\beta_{\alpha} + \delta \mid \alpha < \gamma)$ in $\Theta$ in $V$. Now let $\iota (\delta) = \{ \alpha < \gamma \mid g \bigl( a_{\delta} (\alpha) \bigr)  =1 \}$. Then $\iota (\delta)$ is a subset of $\gamma$ for each $\delta < \Theta^V$.

We will verify that the function $\iota \colon \Theta^V \to \wp (\gamma)^{V[G]}$ is injective.
Let $\delta , \epsilon$ be distinct ordinals less than $\Theta^V$. We will see that $\iota (\delta) \neq \iota (\epsilon)$. 
First notice that the functions $a_{\delta}$ and $a_{\epsilon}$ are different everywhere: 
For all $\alpha < \gamma$, we have $a_{\delta} (\alpha) = \beta_{\alpha} + \delta \neq \beta_{\alpha} + \epsilon = a_{\epsilon} (\alpha)$. 
Now since $a_{\delta}$ and $a_{\epsilon}$ are different everywhere and both are in $V$, by the genericity of $G$, there is an $\alpha < \gamma$ such that $g\bigl ( a_{\delta} (\alpha) \bigr) \neq g \bigl( a_{\epsilon} (\alpha) \bigr)$, and hence $\alpha \in \iota (\delta) \bigtriangleup \iota (\epsilon)$. Therefore, we have $\iota (\delta) \neq \iota (\epsilon)$, as desired.

This completes the proof of Theorem~\ref{thm:subset-of-Theta-negative}.
\end{proof}

\section{Questions}\label{sec:Q}

We close this paper by raising two questions.

\begin{Q}\label{q1}
Assume $\ZF + \AD$. Let $\PP$ be a poset which adds a new real and let $G$ be $\PP$-generic over $V$. Then must $\AD$ fail in $V[G]$?
\end{Q}

To answer \lq No' to Question~\ref{q1}, one would need to find a poset which changes the structure of cardinals below $\Theta$ drastically as follows: 
Woodin proved that if there is a poset which adds a new real while preserving the truth of $\AD$, then the poset must collapse $\omega_1$. 
Also, by Theorem~\ref{thm:increasing-Theta-destruction}, if such a poset exists in a model of $\ZF + \AD^+ + \lq\lq V = \LL \bigl( \wp (\RR) \bigr)$'', then the poset must preserve $\Theta$. 
Furthermore, by the arguments for \cite[Lemma~2.10]{MR4242147} by Chan and Jackson, if a poset adds a new real while preserving the truth of $\AD$, then any weak partition preperty of a cardinal in its generic extension cannot be witnessed by a club in the ground model. Hence, if such a poset preserves $\Theta$ as well, then for cofinaly many cardinals $\kappa$ below $\Theta$, the poset must shoot a club in $\kappa$ which does not contain any club in $\kappa$ in the ground model. 

There are many things we do not know on forcings over $\ZF + \AD$ especially when $\Theta$ is singular. One of them is whether the assumption \lq\lq $\Theta$ is regular'' in Theorem~\ref{thm:subset-of-Theta-positive} is essential or not: 
\begin{Q}
Assume $\mathsf{ZF}+\mathsf{AD}^+ + \lq\lq V = \mathrm{L} \bigl(\wp (\mathbb{R})\bigr)"$. Suppose that $\Theta$ is singular. Then is there any poset which adds a new subset of $\Theta$ while preserving $\AD$?
\end{Q}


\bibliographystyle{plain}
\bibliography{myreference}

\end{document}